\newcommand{\CC}{\mathbf C}
\newcommand{\sgn}{\operatorname{sgn}}
\newtheorem{thm}{Theorem}[section]
\newtheorem{prop}[thm]{Proposition}
\newtheorem{cor}[thm]{Corollary}
\newtheorem{lemma}[thm]{Lemma}
\theoremstyle{definition}
\newcounter{saveenumi}
\title{An algorithmic Littlewood-Richardson rule}
\author{Ricky Ini Liu\\Massachusetts Institute of Technology\\Cambridge, Massachusetts\\\texttt{riliu@math.mit.edu}}
\begin{document}
\maketitle

\begin{abstract}
We introduce a Littlewood-Richardson rule based on an algorithmic deformation of skew Young diagrams and present a bijection with the classical rule. The result is a direct combinatorial interpretation and proof of the geometric rule presented by Coskun \cite{Coskun}. We also present a corollary regarding the Specht modules of the intermediate diagrams.
\end{abstract}

\section{Introduction}

The ubiquitous Littlewood-Richardson coefficients are a collection of nonnegative integer constants that appear, among other places, in algebraic combinatorics, algebraic geometry, and representation theory. A number of combinatorial rules, typically called Littlewood-Richardson rules, have been formulated to compute these constants (for instance, see \cite{KnutsonTao}, \cite{Sagan}, \cite{EC2}, \cite{Vakil}). One such rule, described by Coskun \cite{Coskun}, calculates these coefficients via the cohomology class of the intersection of two Schubert varieties in the Grassmannian: using an appropriate sequence of degenerations, the intersection is deformed into a union of smaller varieties whose Schubert classes are evident. This rule is described combinatorially by deforming certain diagrams of squares called Mondrian tableaux.

In this paper, we present a combinatorial simplification of this rule from deforming Mondrian tableaux to deforming Young diagrams. We also provide a non-geometric explanation for its ability to correctly compute Littlewood-Richardson coefficients, relating it to the classical Littlewood-Richardson rule. We also prove an easy corollary about the structure of Specht modules for the deformed diagrams.

After discussing some preliminaries in Section 2, we will introduce the rule via Algorithm 1 at the beginning of Section 3. The remainder of Section 3 will be devoted to exhibiting a bijection between this rule and a classical Littlewood-Richardson rule. In Section 4, we will give an application of this result to Specht modules. Finally, in Section 5, we will pose some directions for further research.

\section{Preliminaries}

A \emph{partition} $\lambda = (\lambda_1, \lambda_2, \dots, \lambda_s)$ of $|\lambda|=m$ is a sequence of weakly decreasing positive integers summing to $m$. (We may sometimes add or ignore trailing zeroes of $\lambda$ for convenience.)

To any partition, we associate its \emph{Young diagram} (which we also denote by $\lambda$) as follows: in a rectangular grid of boxes, let $(i,j)$ denote the box in the $i$th row from the top and the $j$th column from the left, $i,j \geq 1$. Then the Young diagram associated to $\lambda$ consists of all boxes $(i,j)$ with $j \leq \lambda_i$.

If $\lambda$ and $\mu$ are two partitions such that $\lambda_i \geq \mu_i$ for all $i$, then the \emph{skew Young diagram} $\lambda / \mu$ is the collection of boxes lying in $\lambda$ but not in $\mu$.

We will often assume that our partitions are contained within some $k \times (n-k)$ rectangle (so that $\lambda \subset k \times (n-k)$ means that $\lambda$ has at most $k$ nonzero parts, each of size at most $n-k$). In this case, let us write

\[
\lambda^{\vee}=(n-k-\lambda_k, n-k-\lambda_{k-1}, \dots, n-k-\lambda_1).
\]

Informally, the Young diagrams for $\lambda$ and $\lambda^{\vee}$ exactly fit together to form a $k \times (n-k)$ rectangle if the latter is rotated by a half-turn.

In general, we will refer to any collection of boxes as a \emph{diagram}.

\bigskip

Let $G(k,n)$ denote the Grassmannian of $k$-dimensional subspaces of an $n$-dimensional complex vector space $V$. It is well known that the cohomology ring $H^*(G(k,n))=H^*(G(k,n),\CC)$ can be described as follows. Fix a flag $\{0\}=V_0 \subset V_1 \subset \dots \subset V_n = V$, with $\dim V_i=i$. For $\lambda \subset k \times (n-k)$, we associate the \emph{Schubert variety}

\[
\Sigma_{\lambda}=\{\Lambda \in G(k,n) \mid \dim(\Lambda \cap V_{n-k+i-\lambda_i}) \geq i \text{ for } 1 \leq i \leq k\}.
\]

It is easy to check that $\dim \Sigma_\lambda = k(n-k)-|\lambda|$.

Let $\sigma_{\lambda}$ denote the \emph{Schubert class} in cohomology that is Poincar\'e dual to the homology class of $\Sigma_{\lambda}$. Then $\{\sigma_{\lambda} \mid \lambda \subset k\times (n-k)\}$ forms an additive basis for $H^*(G(k,n))$. Let us write

\[
\sigma_{\mu} \smile \sigma_{\nu} = \sum_{\lambda \subset k \times (n-k)} c_{\mu\nu}^{\lambda} \sigma_\lambda.
\]

The structure constants $c_{\mu\nu}^{\lambda}$ for $H^*(G(k,n))$ are known as the \emph{Littlewood-Richardson coefficients}. They are clearly symmetric in $\mu$ and $\nu$, but they also have a number of other symmetries: in fact, $c_{\mu\nu}^{\lambda}$ is symmetric in $\mu$, $\nu$, and $\lambda^{\vee}$. In particular,

\[
c_{\mu\nu}^{\lambda} = c_{\mu\lambda^{\vee}}^{\nu^{\vee}}.
\]

Since cup product is dual to taking intersections of cycles in homology, it follows that the Littlewood-Richardson coefficients are nonnegative integers. One method of determining these coefficients is given by the classical Littlewood-Richardson rule, which is described below.

A \emph{tableau} is a filling of the boxes in some diagram with positive integers. In the case of a Young diagram, this is said to be a \emph{Young tableau}. By default, Young tableaux are drawn in \emph{English position}, that is, justified to the northwest. At times, it will be convenient for us to consider tableaux justified in different directions. By reflecting $\lambda \subset k \times (n-k)$ across the horizontal axis of the rectangle, we obtain a tableaux justified to the southwest (this is sometimes said to be \emph{French position}). Likewise, a reflection across the vertical axis yields a diagram justified to the northeast, and a half-turn yields a diagram justified to the southeast. We will always number the rows and columns of the rectangle from top to bottom and left to right. The definitions below are to be applied when the tableau in question is drawn in English position.

A (skew) Young tableau of shape $\lambda / \mu$ is said to be \emph{semistandard} if each row is weakly increasing and each column is strictly increasing. The \emph{reverse reading word} of a tableau is the sequence of numbers in the tableau read from top to bottom, right to left. The \emph{weight} of a tableau is the sequence of integers $(\alpha_1, \alpha_2, \dots)$, where $\alpha_i$ is the number of occurrences of $i$.

A \emph{ballot sequence} is a sequence of positive integers such that in any initial segment of the sequence, there are at least as many occurrences of $i$ as there are of $i+1$. A semistandard Young tableau whose reverse reading word is a ballot sequence is called a \emph{Littlewood-Richardson tableau}.

\begin{prop}[Classical Littlewood-Richardson rule]
The coefficient $c_{\mu\nu}^{\lambda}$ is the number of Littlewood-Richardson tableaux of shape $\lambda / \mu$ and weight $\nu$.
\end{prop}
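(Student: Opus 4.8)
The plan is to deduce this from the Schur expansion of a product in the ring of symmetric functions, so the first step is to trade the geometrically defined $c_{\mu\nu}^{\lambda}$ for an algebraic one. I would invoke the standard presentation of the cohomology ring: $H^*(G(k,n))$ is the quotient of the ring of symmetric functions by the ideal spanned by the Schur functions $s_{\theta}$ with $\theta\not\subset k\times(n-k)$ (this span is an ideal because $s_{\theta}$ can contribute to $s_{\rho}$ only when $\theta\subseteq\rho$), and under this isomorphism $\sigma_{\lambda}\mapsto s_{\lambda}$ for $\lambda\subset k\times(n-k)$. Hence, for such $\lambda$, the coefficient $c_{\mu\nu}^{\lambda}$ is exactly the coefficient of $s_{\lambda}$ in the Schur expansion of $s_{\mu}s_{\nu}$, and it suffices to prove the purely combinatorial identity
\[
s_{\mu}\cdot s_{\nu}=\sum_{\lambda}d_{\mu\nu}^{\lambda}\,s_{\lambda},\qquad
d_{\mu\nu}^{\lambda}:=\#\{\text{Littlewood--Richardson tableaux of shape }\lambda/\mu\text{ and weight }\nu\}.
\]

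The next step is to bring in the Hall inner product on symmetric functions, under which $\{s_{\lambda}\}$ is orthonormal and multiplication by $s_{\mu}$ is adjoint to the skewing operator $s_{\mu}^{\perp}$ that sends $s_{\lambda}$ to the skew Schur function $s_{\lambda/\mu}=\sum_{T}x^{T}$, the sum over semistandard tableaux $T$ of shape $\lambda/\mu$; that this generating function is symmetric follows from the Bender--Knuth involutions. Then $c_{\mu\nu}^{\lambda}=\langle s_{\mu}s_{\nu},s_{\lambda}\rangle=\langle s_{\nu},s_{\lambda/\mu}\rangle$, so the claim becomes the \emph{skew} Littlewood--Richardson expansion: the coefficient of $s_{\nu}$ in $s_{\lambda/\mu}$ equals $d_{\mu\nu}^{\lambda}$.

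To identify that coefficient I would pair both sides with the complete homogeneous symmetric function $h_{\alpha}$. Since $\{h_{\alpha}\}$ and $\{m_{\alpha}\}$ are dual bases, pairing with $h_{\alpha}$ reads off the coefficient of $m_{\alpha}$, so $\langle s_{\lambda/\mu},h_{\alpha}\rangle$ is the number of semistandard tableaux of shape $\lambda/\mu$ and weight $\alpha$, while $\langle s_{\nu},h_{\alpha}\rangle$ is the Kostka number $K_{\nu\alpha}$. The claim thus reduces to the combinatorial identity, for every weight $\alpha$,
\[
\#\{\text{semistandard tableaux of shape }\lambda/\mu,\ \text{weight }\alpha\}=\sum_{\nu}d_{\mu\nu}^{\lambda}\,K_{\nu\alpha},
\]
which I would prove bijectively by jeu de taquin: each semistandard tableau $T$ of shape $\lambda/\mu$ has a well-defined rectification $\operatorname{rect}(T)$, a tableau of some straight shape $\nu$ with the same weight; grouping the $T$'s by their rectification, and using that the number of $T$ with a prescribed rectification of shape $\nu$ depends only on $\nu$ (not on the rectification itself) and equals $d_{\mu\nu}^{\lambda}$, the count becomes $\sum_{\nu}d_{\mu\nu}^{\lambda}K_{\nu\alpha}$, there being $K_{\nu\alpha}$ straight tableaux of shape $\nu$ and weight $\alpha$ available to serve as rectifications.

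The real work, and the step I expect to be the main obstacle, is this jeu de taquin argument. One must develop enough of the structure theory of jeu de taquin --- confluence (so that $\operatorname{rect}$ is well defined), reversibility of slides, and compatibility with Knuth equivalence --- to see that the fibers of $\operatorname{rect}$ have reference-independent size, and then prove the key lemma that the semistandard tableaux of shape $\lambda/\mu$ that rectify to the superstandard tableau $U_{\nu}$ of shape $\nu$ (row $i$ filled entirely with $i$'s) are precisely the Littlewood--Richardson tableaux of shape $\lambda/\mu$ and weight $\nu$; equivalently, a word is a ballot sequence of weight $\nu$ if and only if it is Knuth-equivalent to the reading word of $U_{\nu}$. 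This is essentially Sch\"utzenberger's theorem and is proved via the Knuth (plactic) relations, whereas the cohomology presentation and the adjointness step above are formal. An alternative that sidesteps jeu de taquin is to verify instead that both $\langle s_{\mu}s_{\nu},s_{\lambda}\rangle$ and $d_{\mu\nu}^{\lambda}$ satisfy the Pieri recursion obtained by removing the last row of $\nu$, using Lindstr\"om--Gessel--Viennot lattice paths for the Jacobi--Trudi determinant on the symmetric-function side --- but then the difficulty simply migrates to establishing the Pieri step for the tableau count $d_{\mu\nu}^{\lambda}$.
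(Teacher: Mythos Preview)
The paper does not prove this proposition at all: it is stated in the Preliminaries section as a known result, with implicit reference to standard sources such as \cite{Sagan} and \cite{EC2}, and is then used as a black box in the proof of Theorem~\ref{main}. So there is no ``paper's own proof'' to compare your proposal against.

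That said, your outline is a correct and standard route to the classical rule. The reduction from the geometric $c_{\mu\nu}^{\lambda}$ to Schur structure constants via the presentation $H^*(G(k,n))\cong \Lambda/I$ is fine, as is the adjointness step $\langle s_\mu s_\nu, s_\lambda\rangle = \langle s_\nu, s_{\lambda/\mu}\rangle$ and the pairing against $h_\alpha$ to turn the problem into counting semistandard tableaux grouped by rectification. You have also correctly located the genuine content: confluence of jeu de taquin, the fact that the fiber size over a rectification of shape $\nu$ is independent of which straight tableau of shape $\nu$ one picks, and the identification of the fiber over the superstandard $U_\nu$ with Littlewood--Richardson tableaux. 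These are Sch\"utzenberger's theorems and are not short; your proposal is honest in flagging them as the obstacle rather than sweeping them under the rug. The alternative via a Pieri recursion is also viable but, as you note, just relocates the difficulty. If you were to write this up in full you would essentially be reproducing a chapter of Fulton's \emph{Young Tableaux} or the treatment in \cite{Sagan}, which is presumably why the paper simply cites the result.
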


A number of different rules have been described for enumerating Littlewood-Richardson coefficients, all of which are generically called Littlewood-Richardson rules. In the next section we will describe another such rule, which we will later show is in bijection with the classical Littlewood-Richardson rule.

\section{The rule}

We will first describe the algorithm that produces the Littlewood-Richardson coefficients. The algorithm is based upon the Grassmannian algorithm presented by Coskun \cite{Coskun}, which manipulates so-called Mondrian tableaux. Though we do not present an exposition of Mondrian tableaux here, the basic idea is as follows.

The essential information in a Mondrian tableau is a collection of subsets $A_i \subset \{1,2,\dots,n\}$, $1\leq i \leq k$. Let $V_i$ be the subspace of $\CC^n$ spanned by the standard basis vectors $e_j$ for $j \in A_i$. Then we associate the closure of the subvariety of $G(k,n)$ consisting of subspaces $\Lambda$ that have a basis $\{w_1, \dots, w_k\}$ with $w_i \in V_i$. (This is not the definition that Coskun gives, but it is essentially equivalent.) It is the deformation of this subvariety that lies at the heart of the algorithm. Specifically, deforming this subvariety in a specified way yields either a Schubert variety or else a reducible variety whose two components are also represented by Mondrian tableaux. In this way, we can either determine the class of a given tableau or degenerate it into two simpler tableaux. Applying this degeneration iteratively then allows one to determine the class of the original subvariety.

The algorithm we describe below results from a translation of Coskun's Grassmannian algorithm from Mondrian tableaux to Young diagrams with a few combinatorial simplifications. In particular, we transform the subset $A_i$ of $\{1,2,\dots, n\}$ into a subset of $\{1,2,\dots,n-k\}$ of size $|A_i|-k_i$, where $k_i$ is the number of $A_j$ contained in $A_i$. These new subsets then become the rows of our diagram. The degeneration of a Mondrian tableau into two simpler tableaux corresponds to Steps A and B of Algorithm 1. As a result, though we do not present it here, this can be used to give an immediate, geometric proof that Algorithm 1 correctly computes Littlewood-Richardson coefficients. Instead, we will give a purely combinatorial proof of the validity of this algorithm by providing a bijection with the classical Littlewood-Richardson rule.

Let $D=\lambda / \mu$ be a skew Young diagram. Algorithm 1 proceeds by applying several operations to this diagram, eventually resulting in the diagram of a partition $\nu$. However, the algorithm is nondeterministic, so that $\nu$ is not necessarily unique, and there may also be multiple paths to reach a fixed $\nu$. Then $c_{\mu\nu}^{\lambda}$ will count the number of such paths that end at $\nu$.

Algorithm 1 is presented in Table~\ref{alg1}. We denote the leftmost and rightmost boxes in row $i$ by $(i, l_i)$ and $(i, r_i)$. If row $i$ is empty, we will write $l_i = \infty$ and $r_i = 0$. 

\begin{table}
\begin{center}
\begin{tabular}{|p{4.5 in}|}
\hline
\textbf{Input}: A skew Young diagram $D=\lambda / \mu$.\\

\textbf{Output}: A partition $\nu$.\\\\

\textbf{While} there exists $i>1$ such that $l_{i-1}>l_i$, do the following:\\

Take such $i$ to be maximum, and perform either Step A or Step B according to the following rules. If $r_i \geq l_{i-1}-1$ or row $i-1$ is empty, then you may perform Step A, while if $r_i \leq r_{i-1}-1$, then you may perform Step B. (If both conditions hold, choose one to perform.)\\

\textbf{Step A}: For each $l_i \leq j' < l_{i-1}$, switch box $(i,j')$ with box $(i-1,j')$ in $D$.\\

\textbf{Step B}: For each $i'\geq i$ such that $l_i = l_{i'}$, switch box $(i', l_{i'})$ with box $(i', r_{i'}+1)$ in $D$.\\\\

\textbf{Final Step}: Once the $l_i$ are all weakly increasing, shift all boxes in the diagram to the right as far as possible (giving the Young diagram of a partition justified to the northeast).\\

\hline
\end{tabular}
\end{center}
\caption{Algorithm 1}
\label{alg1}
\end{table}

\begin{figure}
\includegraphics[scale=0.8]{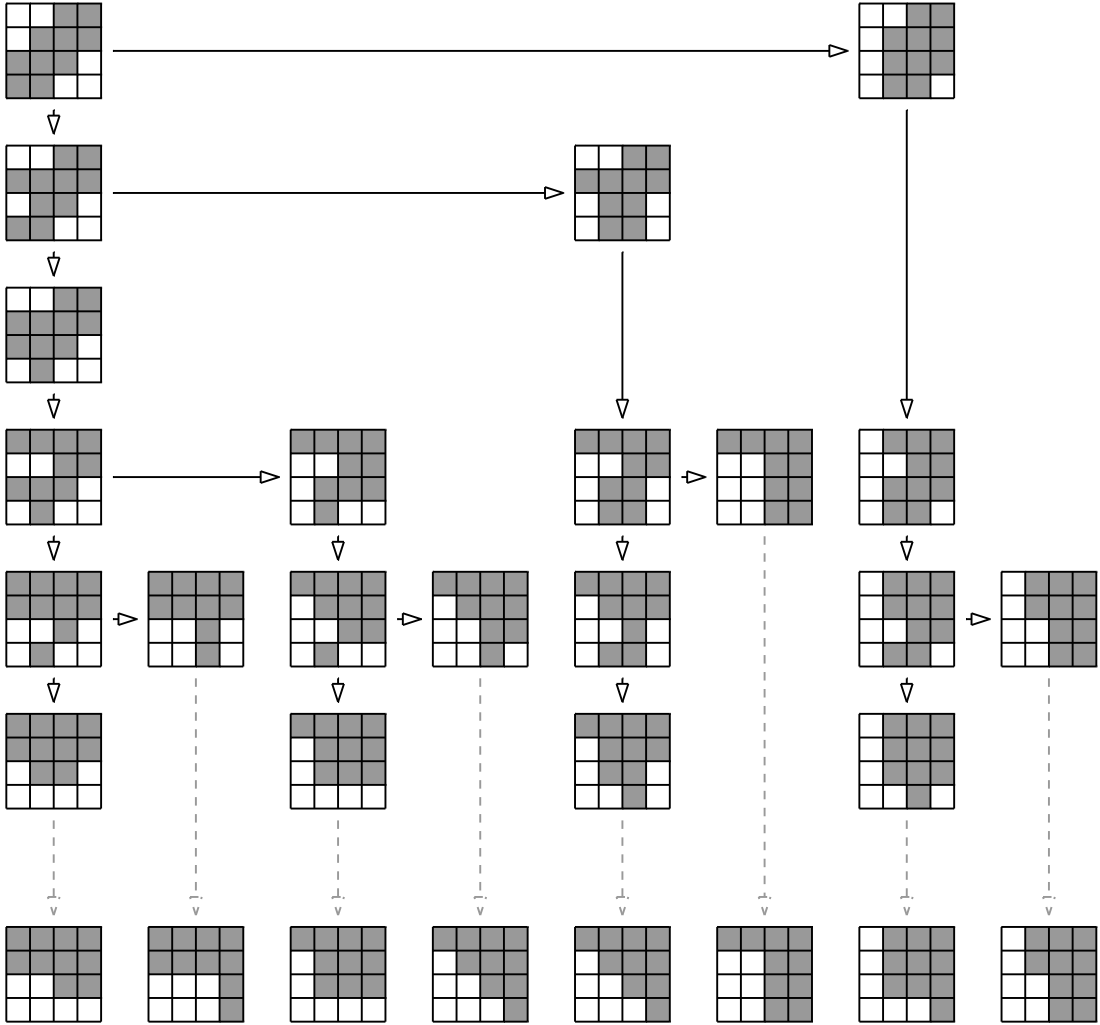}
\caption{Algorithm 1 with $\lambda=(4,4,3,2)$ and $\mu=(2,1)$, showing $c_{\mu\nu}^{\lambda}=2$ for $\nu = (4,3,2,1)$, $c_{\mu\nu}^{\lambda}=1$ for $\nu = (4,4,2), (4,4,1,1), (4,3,3), (4,2,2,2), (3,3,3,1), (3,3,2,2)$, and $c_{\mu\nu}^{\lambda}=0$ otherwise. Equivalently, $\sigma_{21}^2=\sigma_{42}+\sigma_{33}+\sigma_{411}+2\sigma_{321}+\sigma_{222}+\sigma_{3111}+\sigma_{2211}$ in $G(4,8)$. Boxes of $D$ are shaded. Solid vertical arrows indicate applications of Step A and horizontal arrows indicate applications of Step B. The final dotted arrow in each column indicates the final step (if necessary).}
\label{fig1}
\end{figure}

An example computation using Algorithm 1 is given in Figure~\ref{fig1}. Some observations:


\begin{enumerate}

\item \label{1} All intermediate diagrams are row convex, that is, if boxes $(i,j)$ and $(i,j')$ lie in the diagram, then so does $(i,j'')$ for any $j \leq j'' \leq j'$. To see this, note that only Step A can affect row convexity. If row $i-1$ is empty, it simply shifts row $i$ up. Otherwise, the condition on when Step A can be performed ensures that the new row $i-1$ contains boxes in all columns from $l_i$ to $r_{i-1}$, while row $i$ contains boxes in all columns from $l_{i-1}$ to $r_i$.

\item \label{2} If one ignores empty rows, all intermediate diagrams have $r_i$ weakly decreasing. To see this, note that only Step B can affect this condition. This step then replaces $r_{i'}$ with $r_{i'}+1$ if $i' \geq i$ and $l_i = l_{i'}$. By the condition on when Step B can be performed, $r_i+1 \leq r_{i-1}$. Therefore the only problem can occur if we change some row $i'>i$ but not row $i'-1$. This could only happen if $l_{i'-1} \neq l_i = l_{i'}$. But by maximality of $i$, $l_{i'} \geq l_{i'-1} \geq l_i = l_{i'}$, so this is impossible.

\item The algorithm terminates at the Young diagram of a partition justified to the northeast. Indeed, boxes are only moved up or to the right, and no rightmost box is ever moved to the right, implying termination. Since at the end, the $l_i$ are weakly increasing and the $r_i$ are weakly decreasing, it follows that the result is a partition.

\item \label{A-same} By examining the effect of Steps A and B on the $l_i$, one obtains the following: if one ignores empty rows, the sequence of $l_i$ is either of the form
\[l_i \leq l_{i+1} \leq \dots \leq l_k \leq l_{i-1} \leq l_{i-2} \leq \dots \leq l_1\]
or else
\[l_a \leq l_{a+1} \leq \dots \leq l_{i-2} \leq l_i \leq \dots \leq l_k \leq l_{i-1} \leq l_{a-1} \leq l_{a-2} \leq \dots \leq l_1.\] In particular, if Step A is applied to rows $i$ and $i-1$, then any two columns in which a box of $D$ moves are identical above row $i-1$. (One can check that this still holds even if row $i-1$ was empty: if row $i-1$ became empty after applying Step A to rows $i-1$ and $i-2$, then the result follows from the inequalities above. Otherwise it must have been empty in the original diagram, in which case any two affected columns are empty above row $i-1$ anyway.)

\item \label{almostskew} Consider any row convex diagram of boxes with no empty rows such that the $r_i$ are weakly decreasing and the $l_i$ satisfy one of the two inequalities of Observation~\ref{A-same}. We call such a diagram \emph{almost skew}. It is easy to see that any almost skew diagram occurs as an intermediate diagram when applying the algorithm to $\lambda/\mu$, where $\lambda$ has parts of size $r_i$ and $\mu$ has parts of size $l_i-1$.

\setcounter{saveenumi}{\theenumi}

\end{enumerate}


We now come to our main theorem.

\begin{thm} \label{main} The number of ways to apply Algorithm 1 to the skew diagram of shape $\lambda / \mu$ and finish with a diagram of shape $\nu$ is exactly $c_{\mu\nu}^{\lambda}$. \end{thm}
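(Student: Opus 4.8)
The plan is to construct an explicit bijection between the set of algorithm-paths from $\lambda/\mu$ ending at $\nu$ and the set of Littlewood-Richardson tableaux of shape $\lambda/\mu$ and weight $\nu$, which by the Classical Littlewood-Richardson rule (Proposition~1) proves the claim. The central idea is to track additional data alongside each intermediate diagram: I would like to maintain, throughout the execution of the algorithm, a filling of the ``moved'' boxes — or perhaps a filling recording for each box of the current diagram which original row of $\mu$-complement it came from — so that the initial skew shape $\lambda/\mu$ carries the trivial filling and the terminal shape $\nu$ carries (after the Final Step) a canonical Littlewood-Richardson filling. The natural candidate is to assign to each box a label equal to the index of the row it currently occupies, and then watch how Steps A and B permute these labels. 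Observation~\ref{A-same} is the key structural input here: it says that when Step A is applied to rows $i$ and $i-1$, the affected columns agree above row $i-1$, so the label-swapping is ``local'' and compatible with column-strictness; and Observation~\ref{almostskew} tells us the intermediate shapes are exactly the almost skew diagrams, which is the class on which the invariant should be phrased.

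Concretely, the steps I would carry out are: (1) Define a decorated version of the algorithm in which each box carries a label, set up so that labels in each column are strictly increasing and the reverse reading word is a ballot sequence — i.e., each decorated intermediate diagram is (the skew/almost-skew analogue of) a Littlewood-Richardson tableau. (2) Check that Step A and Step B both preserve this property; here Step A is the subtle case and Observation~\ref{A-same} is exactly what makes column-strictness and the ballot condition survive, while Step B (a within-row cyclic shift of the leftmost box to just past the rightmost) should be easy to analyze using Observation~\ref{2}. (3) Check that the Final Step (push everything northeast) turns the decorated terminal diagram into a genuine Littlewood-Richardson tableau of shape $\nu$ and weight $\nu$, hence — since an LR tableau of shape $\nu$ and weight $\nu$ is unique — the weight is forced to be $\nu$ and the tableau is determined; conversely the ``path data'' must be recoverable. (4) Prove the map is a bijection by constructing an inverse: given an LR tableau $T$ of shape $\lambda/\mu$ and weight $\nu$, run the algorithm ``in reverse,'' using $T$ to decide at each branch point whether an A-move or B-move (and on which rows) was taken; show this reverse procedure is well-defined and deterministic, and that it lands on the trivial filling of $\lambda/\mu$. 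One then verifies the two procedures are mutually inverse, possibly by a shared invariant argument rather than step-by-step composition.

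The main obstacle I expect is Step (2)/(4) for Step A: one must pin down the precise decoration rule so that the ballot (reverse reading word) condition is genuinely preserved — not merely column-strictness — and so that the rule is reversible. Because Step A moves a contiguous block of boxes from row $i$ up into row $i-1$, the labels that were read in row $i-1$ now get read earlier (they move left/up in reverse reading order) while the block from row $i$ changes label from $i$ to $i-1$; ensuring every initial segment still has $\#\{i-1\}\ge\#\{i\}$, etc., will require using the inequalities of Observation~\ref{A-same} to compare the two columns involved in any swap. A secondary difficulty is bookkeeping the nondeterminism: when both Step A and Step B are legal, the bijection must separate the two choices, so the decoration must record enough to distinguish them — I anticipate that the choice is detectable from whether, in the corresponding LR tableau, the relevant entry in row $i-1$ equals $i-1$ or is strictly smaller, but making this precise and checking it is consistent with Observations~\ref{1}--\ref{almostskew} is where the real work lies. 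Finally, one should double-check the edge cases flagged in Observation~\ref{A-same} (row $i-1$ empty, either originally or freshly emptied), since the decoration rule must extend across them without breaking the invariant.
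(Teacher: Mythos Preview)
Your overall framework---run a decorated version of the algorithm and read off a tableau---matches the paper, but the specific decoration you propose is incoherent, and step~(3) is where it breaks. If each box carries the label ``index of the row it currently occupies'' and that label travels with the box, then the terminal tableau on $\nu$ has weight equal to the row-length vector $(\lambda_1-\mu_1,\lambda_2-\mu_2,\dots)$ of $\lambda/\mu$, which is neither a partition nor equal to $\nu$; so the sentence ``an LR tableau of shape $\nu$ and weight $\nu$ is unique---hence the tableau is determined'' does not apply. If instead the label is always the \emph{current} row (so it updates at every move), then every path to $\nu$ yields the same trivial tableau and nothing has been encoded, contradicting the bijectivity you assert in the same breath. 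Step~(4) then silently switches the target to LR tableaux of shape $\lambda/\mu$ and weight $\nu$, which is a different object from the one produced in~(3); no mechanism connects them. In short, you have not specified a well-defined map in either direction.

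The paper's actual construction avoids this by decorating the \emph{complement} rather than $D$: it places the Young diagram of $\lambda^{\vee}$ in the southeast corner of the $k\times(n-k)$ rectangle, labels its row $i$ with the integer $k+1-i$, leaves the boxes of $\mu$ unlabeled, and then runs an enlarged algorithm (Algorithm~2) that moves labeled, unlabeled, and $D$-boxes together. The output is a tableau of shape $\nu^{\vee}/\mu$ and weight $\lambda^{\vee}$, and the count one is matching is $c_{\mu\lambda^{\vee}}^{\nu^{\vee}}=c_{\mu\nu}^{\lambda}$ via the known symmetry---not $c_{\mu\nu}^{\lambda}$ directly. The LR property and uniqueness are then established not by the row-by-row ballot check you sketch, but through an alternative characterization (Proposition~\ref{LR-characterization}): $T$ is LR iff each subtableau $T_m=\{(i,j):\text{entry}>i-m\}$ has partition weight. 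The key Lemma~\ref{mainlemma} identifies $T\setminus T_m$ with an explicit snapshot $S_m$ of the decorated algorithm, after which both directions of the bijection fall out. If you want to salvage your approach, the natural fix is to label the terminal $\nu$ by row index and \emph{pull back} along the path to obtain a filling of $\lambda/\mu$ of weight $\nu$; but you would then need a genuinely different argument for semistandardness and the ballot condition, and Observations~\ref{A-same}--\ref{almostskew} alone do not obviously supply it.
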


To prove this theorem, we provide a bijection between applications of Algorithm 1 starting with $\lambda / \mu$ and ending at $\nu$ with Littlewood-Richardson tableaux of shape $\nu^{\vee} / \mu$ and weight $\lambda^{\vee}$. Since these tableaux are counted by $c_{\mu\lambda^{\vee}}^{\nu^{\vee}}=c_{\mu\nu}^{\lambda}$, this will prove the result.

Consider the shape $D = \lambda / \mu \subset k \times (n-k)$. Consider the semistandard Young tableau with both shape and weight $\lambda^{\vee}$ justified to the southeast (so all boxes in row $i$ are numbered $k+1-i$). For clarity, let us label the boxes of $D$ with the letter $D$ and keep the boxes of $\mu$ unlabeled.

To deal with the extra labels, we slightly modify the algorithm as given by Algorithm 2 in Table~\ref{alg2}. An example computation using the modified algorithm is given in Figure~\ref{fig2}.

\begin{table}
\begin{center}
\begin{tabular}{|p{4.5 in}|}
\hline
\textbf{Input}: A skew Young diagram $D=\lambda / \mu$ inside a $k \times (n-k)$ rectangle with boxes labeled as described above.\\

\textbf{Output}: A skew tableau of shape $\nu^{\vee}/\mu$ and weight $\lambda^{\vee}$.\\\\

\textbf{While} there exists a box $(i,j)$ that is labeled but $(i-1,j)$ is unlabeled, do the following:\\

Take such $i$ to be maximum, and perform either Step A or Step B according to the following rules. If $r_i \geq l_{i-1}-1$, or if row $i-1$ or row $i$ contains no box labeled $D$, then you may perform Step A, while if $0 \neq r_i \leq r_{i-1}-1$ and $l_i<l_{i-1}$, then you may perform Step B. (If both conditions hold, choose one to perform.)\\

\textbf{Step A}: For all $j'$, if $(i-1,j')$ is unlabeled, switch box $(i,j')$ with box $(i-1,j')$.\\

\textbf{Step B}: For each $i'\geq i$ such that $l_i = l_{i'}$, switch box $(i', l_{i'})$ with box $(i', r_{i'}+1)$.\\\\

\textbf{Final Step}: Once all unlabeled boxes are at the bottoms of their respective columns, shift all boxes labeled $D$ to the right, keeping the rest of the row in order. The numbered boxes then form the desired tableau (in French position).\\

\hline
\end{tabular}
\end{center}
\caption{Algorithm 2}
\label{alg2}
\end{table}

\begin{figure}
\includegraphics[scale=0.8]{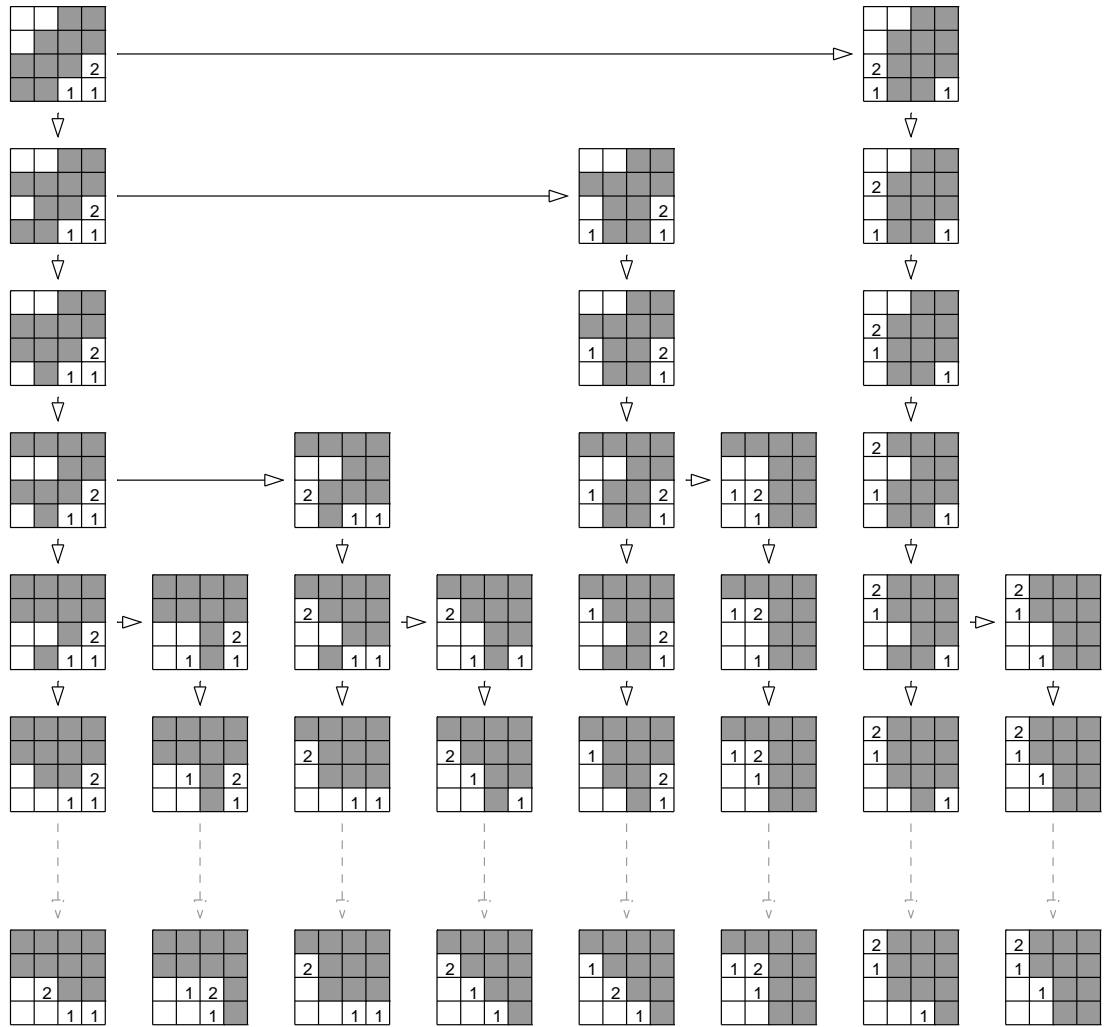}
\caption{Algorithm 2 with $\lambda=(4,4,3,2)$ and $\mu=(2,1)$. For clarity, boxes labeled D are shaded. Solid vertical arrows indicate applications of Step A and horizontal arrows indicate applications of Step B. The final dotted arrow in each column indicates the final step (if necessary). Note the similarity between this figure and Figure~\ref{fig1}.}
\label{fig2}
\end{figure}

Some more observations:

\begin{enumerate}
\setcounter{enumi}{\thesaveenumi}

\item \label{shape} The unlabeled boxes are always left justified within their rows, and they never switch columns. Therefore, at the end of the algorithm, the unlabeled boxes form the same shape $\mu$ that they did at the beginning of the algorithm. It follows that the numbered boxes fill the skew shape $\nu^{\vee} / \mu$.

\item Algorithm 2 acts on $D$ in the same way as Algorithm 1, except that some steps in Algorithm 2 do not move any boxes of $D$ and hence do not exist in Algorithm 1. To see that Step A acts on boxes of $D$ in the same way in both algorithms, it suffices to show that no numbered box ever lies in the same column and above a box labeled $D$, and it is easy to check that this can never happen as a result of performing either Step A or Step B. To see that the extra steps do not change the number of ways to arrive at a diagram where the boxes labeled $D$ have shape $\nu$, it suffices to check that any time we perform Step A in Algorithm 2 without moving any boxes labeled $D$, we could not have performed Step B instead. But in all these cases, either row $i-1$ or row $i$ is empty, or $l_i \geq l_{i-1}$, so only Step A is possible.

\item \label{stepA} Note that the unlabeled boxes are only moved during Step A, and that each such step swaps the unlabeled boxes in two rows. Then notice that (ignoring Step B) the occurrences of Step A are always the same, and they occur in the same order: if $\mu$ has $s$ nonzero parts, then Step A is always performed first for $i=s+1, s+2, \dots$, the last of which produces an intermediate stage $S_s$. Then it is performed for $i=s, s+1, \dots$, resulting in a stage $S_{s-1}$, then for $i=s-1, s, \dots$, resulting in $S_{s-2}$, and so forth. Here, the ellipses indicate that $i$ increases either to $k$ or until rows $i$ and $i+1$ contain the same number of unlabeled boxes. These intermediate stages will be important in Lemma~\ref{mainlemma} below.


\end{enumerate}

We will now show that Algorithm 2 uniquely produces each of our desired tableaux. It will be convenient for us to introduce a slightly different characterization of Littlewood-Richardson tableaux.

\begin{prop} \label{LR-characterization}
Let $T$ be a semistandard skew Young tableau. For any integer $m$, let $T_m$ be the collection of all boxes $(i,j)$ that contain a number greater than $i-m$. Then $T_m$ is a semistandard skew Young tableau. Moreover, $T$ is a Littlewood-Richardson tableau if and only if for every $m$, the weight of $T_m$ is a weakly decreasing sequence.
\end{prop}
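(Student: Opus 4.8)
The plan is to reduce both assertions to statements about the partial row-counts $\#\{(i,j):T(i,j)=t,\ i\le r\}$ of $T$. For the first assertion, I would set $f(i,j)=T(i,j)-i$ for each box $(i,j)$ of $T$. Since each row of $T$ weakly increases, $f$ weakly increases when one moves one step rightward; since each column of $T$ strictly increases, $T(i+1,j)\ge T(i,j)+1$, so $f$ also weakly increases when one moves one step downward. A box $(i,j)$ of $T$ lies in $T_m$ precisely when $f(i,j)>-m$, so if $(i,j)\in T_m$ then every box of $T$ immediately to its right or immediately below it also lies in $T_m$; equivalently, the set $B$ of boxes of $T$ \emph{not} in $T_m$ is closed under moving one step left or up within $T$. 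Since $\mu$ has the same closure property within $\lambda$, the union $\tau:=\mu\cup B$ is a subdiagram of $\lambda$ closed under moving left or up, hence is the Young diagram of a partition with $\mu\subseteq\tau\subseteq\lambda$. Therefore $T_m=\lambda/\tau$ is a skew Young diagram, and the filling it inherits from $T$ is semistandard, being the restriction of a semistandard filling to a sub-skew-shape.

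Next I would reformulate the weight condition. A box $(i,j)$ with $T(i,j)=t$ belongs to $T_m$ iff $t>i-m$, i.e.\ iff $i\le t+m-1$, so the $t$-th part of the weight of $T_m$ is $\alpha_t=\#\{(i,j):T(i,j)=t,\ i\le t+m-1\}$. Thus ``the weight of $T_m$ is weakly decreasing for every integer $m$'' says exactly that $\alpha_t\ge\alpha_{t+1}$ for all $t\ge1$ and all $m$. Substituting $r=t+m-1$ (so that $t+m=r+1$) and letting $t\ge1$ and $r$ range over all integers, this becomes
\[
\#\{(i,j):T(i,j)=t,\ i\le r\}\ \ge\ \#\{(i,j):T(i,j)=t+1,\ i\le r+1\},
\]
for all $t\ge1$ and all integers $r$ (the inequality being vacuous for $r<0$).

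It then remains to reformulate the Littlewood-Richardson condition in the same way. By definition $T$ is a Littlewood-Richardson tableau iff its reverse reading word $w$ is a ballot sequence, where $w$ is the concatenation, for $i=1,2,\dots$, of the entries of row $i$ read from right to left. Any prefix of $w$ contains, for some $i\ge1$, all of rows $1,\dots,i-1$ together with an initial segment of the right-to-left reading of row $i$. Fix $t\ge1$ and $i\ge1$. Because row $i$ weakly increases from left to right, its entries that are $\ge t+1$ form a rightmost block, so in the right-to-left reading of row $i$ every entry $\ge t+1$ precedes every entry equal to $t$. Hence among all prefixes of $w$ ending within row $i$, the one minimizing (number of $t$'s)$\,-\,$(number of $(t+1)$'s) is the one consisting of all of rows $1,\dots,i-1$ together with all entries of row $i$ that are $\ge t+1$; for that prefix these counts are $\#\{(p,j):T(p,j)=t,\ p\le i-1\}$ and $\#\{(p,j):T(p,j)=t+1,\ p\le i\}$ respectively. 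Therefore the ballot condition for the pair $(t,t+1)$ holds iff $\#\{(p,j):T(p,j)=t,\ p\le i-1\}\ge\#\{(p,j):T(p,j)=t+1,\ p\le i\}$ for all $i\ge1$, and ranging over all $t$ this is precisely the family of inequalities displayed above, with $r=i-1$. Comparing the two reformulations finishes the proof.

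The step that requires the most care is the last one: one must check that among the prefixes of $w$ terminating inside a fixed row $i$ the ``worst'' one for the pair $(t,t+1)$ really is the indicated prefix --- a shorter prefix has recorded no more $(t+1)$'s and no fewer $t$'s, while a longer one has already recorded all the $(t+1)$'s of row $i$ and thereafter picks up only further $t$'s --- and that prefixes ending between two entries exceeding $t+1$ change neither count. Everything else is routine bookkeeping of indices.
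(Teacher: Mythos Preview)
Your proof is correct and follows essentially the same approach as the paper's: both reduce the two assertions to the family of row-count inequalities $\#\{(i,j):T(i,j)=t,\ i\le r\}\ge\#\{(i,j):T(i,j)=t+1,\ i\le r+1\}$, identifying this simultaneously with ``weight of $T_m$ weakly decreasing for all $m$'' and with the ballot condition. The only difference is presentational: you introduce the auxiliary function $f(i,j)=T(i,j)-i$ to show $T_m$ is skew, and you give a careful justification (via the ``worst prefix'' analysis) of the equivalence between the ballot condition and the row-count inequalities, whereas the paper simply asserts this equivalence.
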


\begin{proof}
Suppose $(i,j) \in T \backslash T_m$. Then we cannot have $(i-1,j) \in T_m$, for then $(i,j)$ would contain a number at most $i-m$ but $(i-1,j)$ would contain a number greater than $i-m-1$, which contradicts that the columns of $T$ strictly increase. Likewise, we cannot have $(i,j-1) \in T_m$ since the rows of $T$ weakly increase. It follows that $T_m$ has skew shape and is hence a semistandard skew Young tableau.

Let $T_m$ have weight $(\alpha_{1,m}, \alpha_{2,m}, \dots)$. Then $\alpha_{p,m}$ is the number of occurrences of $p$ in $T$ above row $p+m$. Note that $T$ is a Littlewood-Richardson tableau if and only if the number of occurrences of $p$ above row $i$ is at most the number of occurrences of $p-1$ above row $i-1$. But this condition is just that $\alpha_{p,i-p} \leq \alpha_{p-1,i-p}$, which is exactly the condition that the weight of each $T_m$ is weakly decreasing.
\end{proof}

The key step in the proof of the main theorem is given in the following lemma.

\begin{lemma} \label{mainlemma}
Let $m$ be a positive integer. Suppose that we apply Algorithm 2 to a diagram $D$ of shape $\lambda/\mu$ to arrive at a tableau $T$. Consider the first intermediate stage $S_m$ at which row $p$ contains $\mu_{k+m-p}$ unlabeled boxes for all $k+m-s \leq p \leq k$, where $s$ is the number of nonzero parts of $\mu$. Construct a skew tableau $T^{(m)}$ of some shape $\rho^{(m)}/\mu$ such that the numbers in row $i$ of $T^{(m)}$ are exactly the numbers appearing in the first $\mu_m$ columns of row $k+m-i$ of $S_m$ (in the same order that they appear). Then $T^{(m)}$ is semistandard, as is $T$. Moreover, $T^{(m)}=T \backslash T_m$, where $T_m$ is defined as in Proposition~\ref{LR-characterization}.
\end{lemma}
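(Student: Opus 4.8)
The plan is to describe the intermediate stage $S_m$ explicitly and then match it against $T\setminus T_m$ (in the notation of Proposition~\ref{LR-characterization}); the semistandardness of $T^{(m)}$ and of $T$, and the equality $T^{(m)}=T\setminus T_m$, will all follow from a sufficiently precise account of how the numbered boxes move. First I would pin down $S_m$: since Algorithm~2 terminates with every unlabeled box at the bottom of its column, a stage with the stated property exists, and by Observation~\ref{stepA} the first such stage is the one reached after the sweep of Step~A operations that begins at row $m+1$. At $S_m$ the parts $\mu_m,\dots,\mu_s$ of $\mu$ form a left-justified block flush with the bottom of the $k\times(n-k)$ rectangle, row $p$ holding $\mu_{k+m-p}$ unlabeled boxes for $k+m-s\le p\le k$, while $\mu_1,\dots,\mu_{m-1}$ still occupy rows $1,\dots,m-1$. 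By Observations~\ref{1} and~\ref{2} the restriction of $S_m$ to its first $\mu_m$ columns, with the unlabeled boxes removed, is a skew diagram, which is exactly $\rho^{(m)}/\mu$, and the relabelling $p\mapsto k+m-p$ is precisely the reflection turning this south-east picture into an English-position skew tableau.

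The crux is a ``freezing'' statement. From $S_m$ onward, each Step~A belongs to a sweep that begins at some row $\le m$ and drives a single left-justified block of unlabeled boxes, of width $\mu_j$ for some $j<m$ and hence of width at least $\mu_m$, straight down to row $k$; since Step~A never changes a box's column, its net effect on columns $1,\dots,\mu_m$ is merely to shift their contents up by one row. Using the extra hypothesis $l_i<l_{i-1}$ attached to Step~B in Algorithm~2 together with Observations~\ref{1} and~\ref{2}, one checks that after $S_m$ no Step~B disturbs columns $1,\dots,\mu_m$, and that the final step moves no numbered box between rows or reorders it within a row. Hence the numbered boxes in the first $\mu_m$ columns of $S_m$---with their columns and left-to-right order in each row---are carried intact into $T$, each rising one row per remaining sweep, so $T^{(m)}$ is literally a sub-tableau of $T$. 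Semistandardness of $T$, and of $T^{(m)}$, then follows by tracking the algorithm: reading the numbered boxes in the un-rotated orientation, one shows by induction on the steps, using Observation~\ref{1} and the fact that no numbered box ever sits directly above a box labeled $D$, that every intermediate diagram has weakly increasing rows and strictly increasing columns, a property preserved by Steps~A and~B and by the final step.

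It remains to identify this sub-tableau with $T\setminus T_m=\{(i,j)\in T:T(i,j)\le i-m\}$. A numbered box carrying entry $p$ comes from row $i_0=k+1-p$ of the initial south-east tableau; writing a box of row $\rho$ of $S_m$ as occupying row $i=k+m-\rho$ of $T^{(m)}$, the inequality $p\le i-m$ becomes $\rho\le k-p=i_0-1$, i.e. the box has risen strictly above its original row by stage $S_m$. So it suffices to show that a numbered box lies in the first $\mu_m$ columns of $S_m$ if and only if it has risen above its original row by then. One direction is clear once one observes that a numbered box only ever reaches a column $\le n-k$ from its far-right starting position by being dragged upward during a sweep, so lying in a small column forces it to have risen; for the converse one tracks, using the fixed order of the sweeps (Observation~\ref{stepA}) and the control on columns in Observation~\ref{A-same}, exactly which numbered boxes the sweep producing $S_m$ has pulled into the first $\mu_m$ columns.

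This last bookkeeping is the main obstacle: making ``which sweep, how far it runs, and into which column'' line up exactly with the arithmetic of $p$, $i$, and $m$, while reconciling the south-east start, the repeated unit upward shifts, and the final reorientation to English position. The monotonicity in Observations~\ref{1},~\ref{2}, and~\ref{A-same} provides the tools. A secondary point---which I expect to follow from Observations~\ref{1} and~\ref{2} by a short separate argument---is verifying that Step~B and the final step genuinely leave the first $\mu_m$ columns of $S_m$ undisturbed.
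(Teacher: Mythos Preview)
Your plan is a genuinely different route from the paper's, and while it is plausible, it is considerably heavier and leaves the hardest step as acknowledged bookkeeping. The paper does not attempt to track the first $\mu_m$ columns all the way from $S_m$ to $T$. Instead it sets $T^{(0)}=T$ and proves a single inductive claim: \emph{$T^{(m-1)}$ is obtained from $T^{(m)}$ by adding entries $i-m+1$ to row $i$.} This immediately yields by downward induction that every entry in row $i$ of $T^{(m)}$ is at most $i-m$ (giving semistandardness) and hence that $T^{(m)}=T\setminus T_m$. The inductive step takes one paragraph: a numbered box outside the first $\mu_m$ columns of $S_m$ has never moved, so in row $p$ it carries $k+1-p$; between $S_m$ and $S_{m-1}$ every numbered box landing in the first $\mu_{m-1}$ columns of $S_{m-1}$ has risen exactly one row via Step~A (Step~B never changes rows); so a box in row $i$ of $T^{(m-1)}$ either came from row $i$ of $T^{(m)}$ or carries $k+1-(k+m-i)=i-m+1$.

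The advantage of the paper's argument is that it only compares \emph{adjacent} stages and only tracks \emph{rows}, not columns. This bypasses exactly the two points you flag as obstacles. Your ``freezing'' claim---that after $S_m$ no Step~B disturbs columns $1,\dots,\mu_m$---is the place I would worry about: Step~B moves a numbered box leftward to column $l_{i'}$, and you have not argued that $l_{i'}>\mu_m$ always holds at the relevant moments. The paper never needs this, because any numbered box that Step~B drags into the first $\mu_{m-1}$ columns between $S_m$ and $S_{m-1}$ is simply absorbed into the ``new entries equal to $i-m+1$'' clause. Likewise, your separate inductive verification of semistandardness through every intermediate diagram is replaced in the paper by the observation that adding $i-m+1$'s to a tableau whose row $i$ already has entries $\le i-m$ preserves semistandardness.
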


\begin{proof}
Note first that when $m$ is larger than $s$, $S_m$ is just the initial diagram, and $T^{(m)}$ is empty. Moreover, the other $S_m$ occur immediately after a specific Step A, as is described in Observation~\ref{stepA}. The tableau $T^{(m)}$ is essentially constructed by considering only boxes in the first $\mu_m$ columns and the last $k-m+1$ rows, moving all numbered boxes to the left of boxes labeled $D$ in the first $\mu_m$ columns and reindexing. Since no numbered box appears above a box labeled $D$, it follows that the resulting tableau has skew shape. For convenience, we let $T^{(0)}=T$.

We claim that $T^{(m-1)}$ is obtained from $T^{(m)}$ by adding boxes numbered $i-m+1$ to row $i$. It will then follow by an easy induction that $T^{(m)}$ is semistandard with all numbers in row $i$ at most $i-m$. The claim that $T^{(m)}=T \backslash T_m$ will also follow immediately by the definition of $T_m$.

To prove the claim, note that any numbered box that does not lie in the first $\mu_m$ columns of $S_m$ has never been moved; therefore if it lies in row $i$, then it is numbered $k+1-i$. Note that Step B does not change the row of any numbered box, and any numbered box that ends up in the first $\mu_{m-1}$ columns of $S_{m-1}$ will have been moved up by exactly one row by some occurrence of Step A. Thus a box in row $i$ of $T^{(m-1)}$, which corresponds to a box in row $k+m-i-1$ of $S_{m-1}$, came from row $k+m-i$ in $S_m$. If it came from the first $\mu_m$ columns of $S_m$, then it came from row $i$ of $T^{(m)}$; otherwise, it was numbered $k+1-(k+m-i)=i-m+1$, as desired. (The claim for $m=1$ is essentially the same.)
\end{proof}


It follows immediately from the previous lemma that any output of Algorithm 2 is a Littlewood-Richardson tableau.

\begin{lemma}\label{LR-tableau}
Every Littlewood-Richardson tableau of shape $\nu^{\vee} / \mu$ and weight $\lambda^{\vee}$ is uniquely obtainable from Algorithm 2. \end{lemma}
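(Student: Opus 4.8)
The plan is to prove that Algorithm~2 is surjective onto Littlewood-Richardson tableaux of shape $\nu^\vee/\mu$ and weight $\lambda^\vee$, and that the fiber over each such tableau is a single point. Combined with the fact (already established via Lemma~\ref{mainlemma}) that every output is such a Littlewood-Richardson tableau, and with the fact that Algorithm~2 runs Algorithm~1 on the boxes labeled $D$ (Observation~8), this gives a bijection between applications of Algorithm~1 ending at $\nu$ and Littlewood-Richardson tableaux of shape $\nu^\vee/\mu$ and weight $\lambda^\vee$, hence Theorem~\ref{main} by the identity $c_{\mu\lambda^\vee}^{\nu^\vee}=c_{\mu\nu}^\lambda$.

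First I would set up the reverse reconstruction. Given a target Littlewood-Richardson tableau $T$ of shape $\nu^\vee/\mu$ and weight $\lambda^\vee$, I want to recover the unique run of Algorithm~2 producing it. The idea is to reconstruct the intermediate stages $S_m$ from $T$ in decreasing order of $m$, using the characterization of Lemma~\ref{mainlemma}: the content of the first $\mu_m$ columns (in the last $k-m+1$ rows) of $S_m$ is exactly $T\setminus T_m$, suitably reindexed and with numbered boxes pushed left of $D$-boxes. Since $T$ is Littlewood-Richardson, Proposition~\ref{LR-characterization} tells us each $T_m$ has weakly decreasing weight, and this is precisely the combinatorial condition that will make each reconstruction step legal — i.e. that when we pass from $S_m$ to $S_{m-1}$ by the prescribed block of Step~A operations (as in Observation~\ref{stepA}), the hypotheses "$r_i\geq l_{i-1}-1$ or one of the two rows has no $D$-box" hold exactly when Algorithm~2 would perform that Step~A, and fail (forcing the block to terminate at the right row) exactly when it should. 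One then interleaves the Step~B moves: between consecutive stages $S_m$ and $S_{m-1}$, the positions of the $D$-boxes are pinned down by the requirement that they reproduce the corresponding intermediate diagram of Algorithm~1, and by Observation~9 the Step~B operations are forced (whenever Step~A moves no $D$-box and Step~B is available, one must verify only Step~A was possible, so no choice arises there either). The heart is thus checking that at every genuine branch point the choice of which $D$-boxes occupy which columns is determined by the final shape $\nu^\vee$ together with the already-reconstructed unlabeled-box shape, which follows because Observation~\ref{almostskew} says an almost skew diagram is determined by its $r_i$ and $l_i$, and these are in turn read off from the reconstruction.

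The key steps, in order, are: (1) show that from a Littlewood-Richardson tableau $T$ one can define candidate stages $S_m$ by inverting the recipe of Lemma~\ref{mainlemma}, placing $|\lambda^\vee|$-many $D$-boxes so as to complete each reindexed skew tableau $T\setminus T_m$ to a full almost skew diagram of the right row-lengths; (2) verify these $S_m$ are consistent, i.e. $S_{m-1}$ is obtained from $S_m$ by the block of Step~A moves dictated by Observation~\ref{stepA}, using the weakly-decreasing-weight conditions from Proposition~\ref{LR-characterization} to see the block starts and stops at the correct rows; (3) fill in the Step~B moves between stages, noting Observation~9 forbids any Step-A/Step-B ambiguity there, so the run is rigid; (4) conclude the reconstructed run is a legal run of Algorithm~2 and, by construction, outputs $T$; (5) observe uniqueness, since Lemma~\ref{mainlemma} shows any run producing $T$ must pass through these very stages $S_m$, and the moves between them were shown to be forced.

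The main obstacle I expect is step~(2): matching the combinatorial condition "weight of $T_m$ is weakly decreasing" to the precise row at which the Step~A block from $S_m$ to $S_{m-1}$ halts — the ellipsis in Observation~\ref{stepA} that says $i$ increases "until rows $i$ and $i+1$ contain the same number of unlabeled boxes." I would handle this by tracking, for each row, the count of unlabeled boxes as a function of the stage, translating "rows $i$ and $i+1$ agree" into an equality $\mu_{k+m-i}=\mu_{k+m-i-1}$ shifted appropriately, and then showing the failure of the Step~A applicability hypothesis ($r_i<l_{i-1}-1$ with both rows containing $D$-boxes) occurs exactly there, which is where the weakly-decreasing-weight inequality $\alpha_{p,i-p}\leq\alpha_{p-1,i-p}$ becomes an equality. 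A secondary subtlety is confirming that interleaving Step~B does not disturb the unlabeled-box columns (already guaranteed by Observation~\ref{shape}) nor the validity of subsequent Step~A applicability conditions; this is routine given Observations~\ref{2} and~\ref{shape} but must be stated.
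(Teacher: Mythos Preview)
Your overall architecture matches the paper's: proceed through the intermediate stages $S_s, S_{s-1}, \dots$ (decreasing $m$), use Lemma~\ref{mainlemma} to know what the numbered content of each $S_m$ must be, and invoke the weakly-decreasing-weight criterion of Proposition~\ref{LR-characterization}. Uniqueness also falls out the same way.

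However, you have misidentified the technical crux. Your ``main obstacle'' (step~(2)) is a non-issue: by Observation~\ref{stepA}, the sequence of Step~A moves and the row at which each block halts are governed entirely by the unlabeled boxes, hence by $\mu$ alone, independent of $T$ and of the weight conditions. There is nothing to match there.

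The real work is hidden in your step~(3), which you treat as routine. The question is not whether the Step~B moves are \emph{forced} (that is what the remark you call ``Observation~9''---actually the second half of Observation~7---addresses), but whether they are \emph{legal}: when you need to insert a particular column of $T^{(m-1)}\setminus T^{(m)}$, you must verify the Step~B hypothesis $0\neq r_i\leq r_{i-1}-1$. The paper shows this is equivalent to the statement that the weight of $T_m$ minus the weights of some leftmost columns of $T_m\setminus T_{m-1}$ remains weakly decreasing, and then proves that via a short partition-inequality argument (using that each column of $T_m\setminus T_{m-1}$ consists of consecutive integers with top entries weakly decreasing left to right). This is exactly where the Littlewood--Richardson hypothesis on $T$ is consumed, and your plan does not currently account for it.
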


\begin{proof}
To show that every Littlewood-Richardson tableau $T$ is obtainable, it suffices to show as in the previous lemma that at each intermediate stage $S_m$ corresponding to a tableau $T^{(m)}$ as in the lemma, we can reach the desired intermediate stage $S_{m-1}$ corresponding to a tableau $T^{(m-1)}$. 

Note that for any Littlewood-Richardson tableau $T$, the only boxes in $T^{(m)}$ lie in the first $\mu_m$ columns. (In order for a box $(i,j)$ to contain a number at most $i-m$, there must be at least $m$ boxes of $\mu$ in column $j$.) We also note that $T^{(m)}$ must contain all but possibly $n-k-\mu_{m}$ instances of any number $i$. In other words, there can be at most $n-k-\mu_m$ instances of $i$ appearing above row $i+m$. If $i=1$, this is clear, because the only boxes of $T$ above row $m+1$ lie in the last $n-k-\mu_m$ columns. But if the claim holds for $i$, then it immediately holds for $i+1$ by the ballot word condition, and so the claim holds by induction. 


Now consider $S_m$. To reach $S_{m-1}$, we will perform a number of instances of Step A at rows $m, m+1, \dots$ with some instances of Step B in between. The numbered boxes appearing in $T^{(m-1)}$ will consist of all numbered boxes appearing in the first $\mu_{m-1}$ columns of $S_m$ along with all numbered boxes moved by these instances of Step B. As seen above, the numbers in the first $\mu_{m-1}$ columns must lie in $T^{(m-1)}$.

Algorithm 2 builds $T^{(m-1)}$ from $T^{(m)}$ by adding the columns of $T^{(m-1)} \backslash T^{(m)}$ from left to right. The instances of Step B that move boxes from the last $n-k-\mu_{m}$ columns of the diagram serve to insert these numbers as a column of $T^{(m-1)}\backslash T^{(m)}$; the maximum of the inserted numbers decreases by 1 with each instance of Step A that is performed. (From this, it is clear that Algorithm 2 cannot produce the same tableau in two different ways.) We need to show that when we need to insert a column of $T^{(m-1)}\backslash T^{(m)}$, this is allowed by the condition on when we can perform Step B.

The condition that we can perform Step B on row $i$ when $l_i<l_{i-1}$ and $0 \neq r_i \leq r_{i-1}-1$ means that we can perform the requisite instances of Step B whenever, first, the only boxes in $T^{(m-1)} \backslash T^{(m)}$ lie in the first $\mu_{m-1}$ columns, and second, the weight of $T_m=T\backslash T^{(m)}$ minus the weight of some leftmost columns of $T^{(m-1)}\backslash T^{(m)}=T_m\backslash T_{m-1}$ is weakly decreasing. We have shown above that the first condition always holds.

The second condition also always holds: note that each column of $T_m \backslash T_{m-1}$ contains consecutive numbers and that the maximum number in each column weakly decreases from left to right. Then it suffices to show the following: Let $\sigma$ and $\tau$ are two partitions with $\sigma_i \geq \tau_i$ for all $i$. Suppose that for some $i'$, $\sigma_{i'}>\tau_{i'}$ but $\sigma_{i'+1}=\tau_{i'+1}$. Then for $i'' \leq i'$, 
\[\sigma_1 \geq \dots \geq \sigma_{i''-1} \geq \sigma_{i''}-1 \geq \sigma_{i''+1}-1 \geq \dots \geq \sigma_{i'}-1 \geq \sigma_{i'+1} \geq \dots.\]
But this is obvious from the fact that the parts of $\sigma$ are weakly decreasing, with the only subtlety arising from the fact that $\sigma_{i'}-1 \geq \tau_{i'} \geq \tau_{i'+1}=\sigma_{i'+1}$.

It follows that any Littlewood-Richardson tableau $T$ is (uniquely) obtainable from Algorithm 2, proving the lemma.
\end{proof}

With Lemma~\ref{LR-tableau} proven, the proof of Theorem~\ref{main} is immediate.

\begin{proof}[Proof of Theorem~\ref{main}]
By Lemma~\ref{LR-tableau}, Algorithm 2 to applied to $\lambda/\mu$ uniquely yields every Littlewood-Richardson tableau of shape $\nu^{\vee}/\mu$ and weight $\lambda^{\vee}$ for all $\nu$. Since Algorithms 1 and 2 act on the boxes labeled $D$ in identical ways, the number of ways to apply Algorithm 1 to $\lambda/\mu$ and arrive at a shape $\nu$ is exactly the number of these tableaux, which is $c_{\mu\lambda^{\vee}}^{\nu^{\vee}}=c_{\mu\nu}^{\lambda}$.
\end{proof}

In the next section, we will discuss an application of this result to the theory of Specht modules.

\section{Specht modules}

The Littlewood-Richardson coefficients also play an important role in the study of representations of the symmetric group $\Sigma_n$ on $n$ letters. In this section, we will first construct such representations via Specht modules defined for an arbitrary collection of boxes. (For a reference for the preliminary results in this section, see \cite{Sagan}.) We will then derive a simple result about the structure of Specht modules corresponding to almost skew shapes using the algorithm described in the previous section. 

Consider any diagram $D$ of $n$ boxes. Order the boxes of $D$ arbitrarily, and let $\Sigma_n$ act on them in the obvious way. Let $R_D$ be the subgroup containing those $\sigma \in \Sigma_n$ that stabilize each row of $D$, and likewise define $C_D$ for columns of $D$. Let $\CC[\Sigma_n]$ denote the group algebra over $\Sigma_n$, and consider elements  
\[R(D) = \sum_{\sigma \in R_D} \sigma \mbox{ and }
C(D)=\sum_{\sigma \in C_D} \sgn(\sigma)\sigma.\]
Then we define the \emph{Specht module} $V^D$ (over $\CC$) to be the left ideal
\[V^D=\CC[\Sigma_n]C(D)R(D).\]

It is obvious that this does not depend (up to isomorphism) on the ordering chosen on the boxes of $D$. Note also that permuting rows and permuting columns of $D$ does not change $V^D$ up to isomorphism.

It is well known that the finite-dimensional, irreducible representations of $\Sigma_n$ correspond exactly to $V^{\lambda}$, where $\lambda$ ranges over all partitions of $n$. Moreover, if $\mu$ and $\nu$ are partitions of $m$ and $n$, respectively, then $V^{\mu} \otimes V^{\nu}$ is naturally a representation of $S_m \times S_n$, and this gives an induced representation on $S_{m+n}$, which we denote by $V^{\mu} \circ V^{\nu}$. Then we have the following:

\begin{prop}
\[V^{\mu}\circ V^{\nu}\cong \bigoplus_{|\lambda|=m+n} (V^{\lambda})^{\oplus c^{\lambda}_{\mu\nu}}.\]
\end{prop}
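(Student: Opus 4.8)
The plan is to route the statement through symmetric functions by way of the Frobenius characteristic, so that it reduces to an identity already recorded in Section~2. Recall (see \cite{Sagan}) the Frobenius characteristic map $\mathrm{ch}$, which sends the character of a $\Sigma_n$-representation to a homogeneous symmetric function of degree $n$. It has three properties we need: it restricts to a linear isomorphism from the (complexified) representation ring of $\Sigma_n$ onto the degree-$n$ part of the ring of symmetric functions; it sends the irreducible $V^{\lambda}$ to the Schur function $s_{\lambda}$; and it intertwines the induction product with multiplication of symmetric functions, that is, $\mathrm{ch}(V^{\mu}\circ V^{\nu})=\mathrm{ch}(V^{\mu})\,\mathrm{ch}(V^{\nu})$.

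Granting these, $\mathrm{ch}(V^{\mu}\circ V^{\nu})=s_{\mu}\,s_{\nu}$. Expand $s_{\mu}s_{\nu}=\sum_{|\lambda|=m+n} d^{\lambda}_{\mu\nu}\,s_{\lambda}$. Because $\mathrm{ch}$ is injective and the $V^{\lambda}$ form a complete, irredundant list of irreducibles of $\Sigma_{m+n}$, the multiplicity of $V^{\lambda}$ in $V^{\mu}\circ V^{\nu}$ is precisely $d^{\lambda}_{\mu\nu}$; and since $V^{\mu}\circ V^{\nu}$ is an honest representation, these multiplicities are nonnegative integers. It then remains only to identify $d^{\lambda}_{\mu\nu}$ with the Littlewood-Richardson coefficient $c^{\lambda}_{\mu\nu}$ of Section~2.

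For that identification: it is classical (again see \cite{Sagan}) that $s_{\mu}s_{\nu}=\sum_{\lambda}e^{\lambda}_{\mu\nu}\,s_{\lambda}$, where $e^{\lambda}_{\mu\nu}$ is the number of Littlewood-Richardson tableaux of shape $\lambda/\mu$ and weight $\nu$; hence $d^{\lambda}_{\mu\nu}=e^{\lambda}_{\mu\nu}$. By the classical Littlewood-Richardson rule recalled in Section~2 (and noting that that count is defined without reference to any ambient rectangle), $e^{\lambda}_{\mu\nu}=c^{\lambda}_{\mu\nu}$, which completes the argument.

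The only delicate point is the bookkeeping in the last paragraph: $c^{\lambda}_{\mu\nu}$ was introduced as a structure constant for $H^{*}(G(k,n))$, not for the ring of symmetric functions, so one must pass through the combinatorial rule of Section~2 rather than asserting the symmetric-function identity for $s_\mu s_\nu$ directly. Beyond this there is no substantive obstacle, since everything else is standard character theory of $\Sigma_n$. An essentially equivalent alternative would be to establish the restriction statement $\mathrm{Res}^{\Sigma_{m+n}}_{\Sigma_m\times\Sigma_n}V^{\lambda}\cong\bigoplus_{\mu,\nu}(V^{\mu}\boxtimes V^{\nu})^{\oplus c^{\lambda}_{\mu\nu}}$ and invoke Frobenius reciprocity, but this only repackages the same computation.
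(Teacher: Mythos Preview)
Your argument is correct and is the standard route: pass to symmetric functions via the Frobenius characteristic, use that $\mathrm{ch}$ turns the induction product into ordinary multiplication, and then identify the Schur structure constants with the $c^{\lambda}_{\mu\nu}$ of Section~2 through the classical Littlewood-Richardson rule stated there. Your care in noting that the paper defines $c^{\lambda}_{\mu\nu}$ via $H^*(G(k,n))$ rather than via symmetric functions, and that one therefore needs the tableau count of Proposition~2.1 as the bridge, is well placed.

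As for comparison with the paper: there is nothing to compare against. The paper states this proposition as a known preliminary fact with a citation to \cite{Sagan} and offers no proof of its own. So your write-up is not an alternative to the paper's argument but rather a fleshing-out of a result the paper simply quotes.
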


Related to this is the following result: suppose $D$ is the shape of a skew Young diagram $\lambda/\mu$, consisting of $n$ boxes. Then the following proposition holds.

\begin{prop} \label{skew}
\[V^{\lambda/\mu} \cong \bigoplus_{|\nu|=n} (V^{\nu})^{\oplus c^{\lambda}_{\mu\nu}}.\]
\end{prop}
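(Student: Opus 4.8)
The plan is to reduce Proposition~\ref{skew} to the decomposition of $V^\mu \circ V^\nu$ together with the combinatorial machinery of Algorithm 1. The essential idea is that the multiplicity of $V^\nu$ in $V^{\lambda/\mu}$ depends only on the shape of $D=\lambda/\mu$ up to the moves performed by the algorithm, so that running Algorithm 1 lets us replace $\lambda/\mu$ by a disjoint union of straight Young diagrams without changing the Specht module.

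First I would establish the key invariance lemma: if $D'$ is obtained from $D$ by a single application of Step A or Step B of Algorithm 1, then $V^D \cong V^{D'}$. For Step B this is nearly immediate, since switching box $(i',l_{i'})$ with $(i',r_{i'}+1)$ for all the relevant rows is a permutation of columns of $D$ (the affected columns $l_i$ and $r_{i'}+1$ for varying $i'$ are set-wise just relabeled), and we already observed that permuting columns does not change $V^D$ up to isomorphism. For Step A the argument is the content of Observation~\ref{A-same}: when Step A is applied to rows $i$ and $i-1$, the columns in which boxes move are identical above row $i-1$, so within those columns the effect of Step A is precisely to swap two full rows of a sub-rectangle; swapping two rows again does not change $V^D$. (One must be slightly careful that the boxes outside the affected columns are untouched and that row convexity, Observation~\ref{1}, is maintained so that the rows and columns are honest rows and columns of a diagram, but this is routine.) Iterating, $V^D$ is unchanged along the entire run of the algorithm, including the Final Step, which only shifts boxes horizontally within rows and hence permutes columns.

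Next, once the algorithm terminates it does not in general produce a single partition $\nu$, but a nondeterministic collection of outcomes; the bridge between "one fixed run" and "the full module" is that the diagram at any intermediate stage where the $l_i$ have just become weakly increasing is, after the Final Step, the Young diagram of some partition $\nu$, and we want to sum over all runs. The clean way to package this is: $V^{\lambda/\mu}$, as an $S_n$-module, decomposes according to the branching of the algorithm, and at the level of characters each application of Step A or Step B is an isomorphism, so $V^{\lambda/\mu} \cong \bigoplus_{\text{runs}} V^{\nu(\text{run})}$, where $\nu(\text{run})$ is the terminal partition of that run. By Theorem~\ref{main}, the number of runs terminating at a fixed $\nu$ is exactly $c^\lambda_{\mu\nu}$, so this last direct sum is $\bigoplus_{|\nu|=n}(V^\nu)^{\oplus c^\lambda_{\mu\nu}}$, which is the claim.

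I expect the main obstacle to be making the previous paragraph rigorous: a single Step A or Step B is an isomorphism of Specht modules, but Algorithm 1 is \emph{nondeterministic}, so "decompose $V^{\lambda/\mu}$ according to the branching" is not literally meaningful — there is no canonical direct sum decomposition of the module indexed by runs. The correct formulation is to use the classical identity $V^{\lambda/\mu}\cong\bigoplus_{|\nu|=n}(V^\nu)^{\oplus c^\lambda_{\mu\nu}}$ only through its statement (which is the Proposition we are proving) and instead argue purely at the level of the invariance: one shows that the multiplicity $\langle V^{\lambda/\mu}, V^\nu\rangle$ is invariant under Steps A and B, and then invokes Theorem~\ref{main} to count. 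But this risks circularity, since the standard proof of Proposition~\ref{skew} is exactly the classical Littlewood-Richardson computation. The honest route, and the one I would take, is: prove the invariance lemma $V^D\cong V^{D'}$ for single steps (this is genuinely new content, coming from Observation~\ref{A-same}); observe that therefore $V^{\lambda/\mu}$ is isomorphic to $V^{D_0}$ for $D_0$ any terminal diagram reachable by \emph{some} run, hence $V^{\lambda/\mu}\cong V^\nu$ for any single reachable $\nu$ — which is visibly false unless there is a unique outcome. So in fact the invariance lemma alone forces a \emph{stronger} statement than is true, meaning Step A and Step B cannot each be isomorphisms on the nose. The resolution — and this is the real crux — is that Steps A and B are isomorphisms only when they genuinely permute rows/columns, which requires the diagram to be \emph{almost skew} (Observation~\ref{almostskew}); for a general skew diagram one instead gets a short exact sequence or a filtration whose graded pieces correspond to the two branches, so that $V^{\lambda/\mu}$ decomposes as a direct sum over runs at the level of composition factors (equivalently, in the Grothendieck group, which suffices since $\CC[\Sigma_n]$ is semisimple). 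Pinning down that each algorithmic branching gives such a short exact sequence of Specht modules is the step I expect to require the most care, and it is where Coskun's geometric degeneration is mirrored algebraically.
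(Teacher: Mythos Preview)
The paper does not prove Proposition~\ref{skew} at all: it is quoted as a classical preliminary result (the reference given is Sagan), on the same footing as the preceding proposition about $V^\mu\circ V^\nu$. So there is no ``paper's own proof'' to compare against.

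More importantly, your proposed route is circular relative to the paper's logic. You want to deduce Proposition~\ref{skew} from the algorithm; the paper runs the implication in the opposite direction. Propositions~\ref{T-definition} and the one after it establish only an \emph{inclusion} $V^{D^A}\oplus V^{D^B}\hookrightarrow V^D$ when both steps are available (a map $T\colon V^D\to V^{D^A}$ together with $V^{D^B}\subset\ker T$). Iterating this gives merely $\bigoplus_{\text{runs}} V^{\nu(\text{run})}\hookrightarrow V^{\lambda/\mu}$. The paper then \emph{invokes} Proposition~\ref{skew} and Theorem~\ref{main} to see that the two sides have the same dimension, forcing equality at every intermediate stage; that is the content of the Corollary. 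Without already knowing Proposition~\ref{skew}, you have no way to upgrade the inclusion to an isomorphism.

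You diagnose this yourself in the last paragraph: Steps A and B are not isomorphisms of Specht modules in general (your first argument would indeed yield the absurd $V^{\lambda/\mu}\cong V^\nu$ for every reachable $\nu$), and what one actually has is a filtration whose associated graded contains $V^{D^A}\oplus V^{D^B}$. Showing that this containment is an equality---equivalently, that $\ker T = V^{D^B}$ and $T$ is surjective---is precisely the step that the paper cannot do directly and instead closes by appealing to the known decomposition of $V^{\lambda/\mu}$. So the ``real crux'' you flag is not a technical detail to be pinned down; it is the entire content of Proposition~\ref{skew}, and your argument assumes what it sets out to prove.
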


In general, the question of how to decompose the Specht module $V^D$ into irreducible submodules for an arbitrary diagram $D$ is still open. The most general known result, which applies to so-called \emph{percentage-avoiding} diagrams, is due to Reiner and Shimozono \cite{ReinerShimozono}.

\bigskip

Let $D$ be any diagram, and choose two rows $i_1$ and $i_2$. Let us construct another diagram $D^A$ as follows. For $i \neq i_1, i_2$, $(i,j) \in D^A$ if and only if $(i,j) \in D$; $(i_1,j) \in D^A$ if and only if $(i_1,j)$ and $(i_2,j)$ lie in $D$; and $(i_2,j) \in D^A$ if and only if $(i_1,j)$ or $(i_2,j)$ lie in $D$. In other words, to construct $D^A$ from $D$, we move boxes from $(i_1,j)$ to $(i_2,j)$ if possible. This gives an obvious bijection between boxes of $D^A$ and boxes of $D$. If $D$ is an intermediate diagram obtained in Algorithm 1 on which Step A can be applied at row $i$, then $D^A$ with $i_1=i$ and $i_2=i-1$ is the diagram after the step is performed.

The proofs of the following two propositions are adapted directly from James and Peel \cite{JamesPeel}.

\begin{prop} \label{T-definition} There exists a $\CC[\Sigma_n]$-module homomorphism of Specht modules $T\colon V^D \rightarrow V^{D^A}$. \end{prop}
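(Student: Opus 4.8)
# Proof Proposal for Proposition \ref{T-definition}

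The plan is to construct the homomorphism $T$ explicitly using the standard machinery for Specht modules, following the idea behind the Garnir relations as used by James and Peel. Recall that $V^{D^A}$ is spanned by elements of the form $\sigma C(D^A)R(D^A)$ for $\sigma \in \Sigma_n$, and that since $V^D = \CC[\Sigma_n]C(D)R(D)$ is a cyclic module generated by $e_D := C(D)R(D)$, it suffices to define $T$ on the generator $e_D$ and check that the assignment extends to a well-defined $\CC[\Sigma_n]$-module map. Concretely, I would set $T(e_D)$ to be an appropriate element $\xi \in V^{D^A}$ and then define $T(x \cdot e_D) = x \cdot \xi$ for all $x \in \CC[\Sigma_n]$; the only thing requiring proof is that this is well-defined, i.e., that $x \cdot e_D = 0$ implies $x \cdot \xi = 0$. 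Equivalently, letting $I = \operatorname{Ann}_{\CC[\Sigma_n]}(e_D)$ be the left annihilator, we need $I \cdot \xi = 0$, which reduces to showing $\xi$ lies in a submodule of $V^{D^A}$ isomorphic to a quotient of $V^D$, or more directly that $\xi$ satisfies all the relations that $e_D$ does.

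The natural candidate for $\xi$ comes from the bijection between boxes of $D$ and boxes of $D^A$ described just before the proposition. Using this bijection to identify the underlying sets, $R_{D^A} \supseteq R_D$ is generally false (rows change), but the key structural fact is that $C_D$ and $C_{D^A}$ are closely related since Step A only moves boxes vertically within their columns: in fact $C(D)$ and $C(D^A)$ involve column stabilizers of the same column sets up to the rearrangement within columns $i_1, i_2$. The cleanest approach is to take $\xi = C(D^A) R(D) = C(D^A)R(D^A) \cdot (\text{correction})$; more precisely, I would try $\xi = C(D^A)\,\theta$ where $\theta$ is chosen so that $\xi \in V^{D^A}$ and so that the row relations $R(D)$-symmetry of $e_D$ is respected. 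Following James--Peel, one sets $T$ to be (essentially) right multiplication composed with the identification of boxes: the homomorphism property is automatic because right multiplication by a fixed element of $\CC[\Sigma_n]$ commutes with the left action, so the only real content is that the image actually lands in $V^{D^A}$ and that the map kills $\operatorname{Ann}(e_D)$.

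The main obstacle I expect is verifying well-definedness, namely that the relations defining $V^D$ — the row symmetrizer relations $R_D$ and the column antisymmetrizer relations $C_D$, together with the Garnir relations that cut $V^D$ down from the full ``polytabloid'' span — are all sent to zero. The row and column relations are straightforward because the bijection of boxes takes rows of $D$ into rows of $D^A$ wherever they are unchanged, and into the merged row $i_2$ of $D^A$ for the affected rows, while columns are preserved as sets; so $C(D)$-antisymmetry transports to $C(D^A)$-antisymmetry and $R(D)$-symmetry transports appropriately. The subtle point is the Garnir-type relations: one must check that for any ``bad'' configuration of boxes in $D$ across a column boundary, the corresponding Garnir relation in $V^D$ maps to a relation valid in $V^{D^A}$ — this is where the specific geometry of Step A (moving box $(i_1,j)$ down to $(i_2,j)$ only when $(i_2,j) \notin D$, i.e., the two affected rows never collide in a column) is used to guarantee that no new column-incompatibilities are introduced. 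I would handle this by writing $e_D$ in terms of a standard spanning set, applying the map box-by-box, and invoking the straightening laws in $V^{D^A}$; the computation is routine but needs the row-convexity and the structure of almost skew diagrams (Observations \ref{1}, \ref{2}, and \ref{almostskew}) to ensure the relevant column sets behave as claimed. Once well-definedness is established, $\CC[\Sigma_n]$-linearity is immediate and the proof concludes.
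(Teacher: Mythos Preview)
Your proposal circles the right idea (right multiplication by a fixed group-algebra element) but then undermines it by worrying about well-definedness, annihilators, and Garnir relations. Once you declare $T$ to be right multiplication by some $z\in\CC[\Sigma_n]$, the map $x\mapsto xz$ is a left $\CC[\Sigma_n]$-module homomorphism on all of $\CC[\Sigma_n]$, hence on the left ideal $V^D$; there is nothing to check about $\operatorname{Ann}(e_D)$ or straightening. The only task is to choose $z$ so that $e_D\cdot z$ lies in $V^{D^A}$, and your proposal never commits to a specific $z$ nor argues that the image lands where it should.

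The key structural fact you gesture at but do not use sharply is that the bijection of boxes preserves each column as a set, so $C(D)=C(D^A)$ exactly (not merely ``closely related''). The paper exploits this by taking $Y$ to be a set of right coset representatives of $R_D\cap R_{D^A}$ in $R_{D^A}$ and setting $z=\sum_{\sigma\in Y}\sigma$. Then $R(D)z$ is a multiple of $\sum_{\sigma}\sigma$ over a set of left coset representatives $X$ of $R_D\cap R_{D^A}$ in $R_D$, followed by $R(D^A)$; one then shows $C(D)\sigma R(D^A)=0$ for every $\sigma\in R_D\setminus R_{D^A}$ by producing a transposition $\tau\in C_D$ with $\sigma^{-1}\tau\sigma\in R_{D^A}$. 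This forces $C(D)R(D)z=C(D^A)R(D^A)$, so $T(e_D)=e_{D^A}$ and the image is exactly $V^{D^A}$. None of Observations~\ref{1}, \ref{2}, or \ref{almostskew} are needed here; the argument works for an arbitrary diagram $D$ and any choice of rows $i_1,i_2$, as stated. Replace your Garnir/straightening plan with this concrete coset computation.
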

\begin{proof}
We order the boxes of $D$ and $D^A$ to be consistent with the bijection described above. Note $C(D)=C(D^A)$.

Choose a set $X$ of left coset representatives of $R_D \cap R_{D^A}$ in $R_D$ and $Y$ a set of right coset representatives of $R_D \cap R_{D^A}$ in $R_{D^A}$. Let $T$ be the map from $V^D$ to $\CC[\Sigma_n]$ given by right multiplication by $\sum_{\sigma \in Y} \sigma$.

First we show that if $\sigma \in R_D \backslash R_{D^A}$, then $C(D)\sigma R(D^A)=0$. By our choice of $\sigma$, we must have that in $D$, $\sigma$ maps some box $(i_1, j_1)$ to a box $(i_1, j_2)$, with $(i_2, j_1) \not \in D$ but $(i_2, j_2) \in D$. Let $\sigma$ send $(i_2, j_3)$ to $(i_2, j_2)$ in $D$ (and hence in $D^A$ also). But then if $\tau$ is the transposition in $C_D$ that switches boxes $(i_2,j_1)$ and $(i_2,j_2)$, we have that $\sigma^{-1}\tau\sigma$ acts on $D^A$ by transposing $(i_2, j_1)$ and $(i_2, j_3)$, so it lies in $R_{D^A}$. But then
\[C(D)\sigma R(D^A)=C(D)\sigma (\sigma^{-1}\tau\sigma) R(D^A)=(C(D)\tau)\sigma R(D^A)=-C(D)\sigma R(D^A),\]
which implies the claim.

It follows that 
\[C(D^A)R(D^A)=C(D)\left(\sum_{\sigma \in X}\sigma\right) R(D^A)=C(D)R(D)\sum_{\sigma \in Y}\sigma.\] Thus the image of $T$ lies in $V^{D^A}$, as desired.
\end{proof}

In general it seems difficult to precisely determine the kernel of the map $T$ except in special cases \cite{James}. One consequence of Algorithm 1 is that in some cases, this kernel can be naturally described as a Specht module.

If $D$ is an intermediate diagram in Algorithm 1 on which Step B can be performed, let $D^B$ be the resulting diagram. We again order the boxes of $D$ and $D^B$ to be consistent with the bijection implicit in the algorithm.

\begin{prop}
Let $D$ be an intermediate diagram on which both Step A and Step B can be performed at row $i$. Then $V^{D^B}$ is contained in $V^D$ and lies in the kernel of the map $T:V^D \rightarrow V^{D^A}$ described above.
\end{prop}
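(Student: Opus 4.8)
The plan is to mirror the proof of Proposition~\ref{T-definition}, constructing $V^{D^B}$ as an explicit submodule of $V^D$ via right multiplication by a suitable element of $\CC[\Sigma_n]$, and then to verify that the composite with $T$ vanishes. First I would examine the combinatorics of Step B at row $i$: it swaps, for each $i' \geq i$ with $l_{i'} = l_i$, the leftmost box $(i', l_{i'})$ with the new rightmost box $(i', r_{i'}+1)$. Since only columns are changed (no box leaves its row), $R_D = R_{D^B}$, while $C_D \neq C_{D^B}$; this is dual to the situation in Proposition~\ref{T-definition}, where $C_D = C_{D^A}$ and the row groups differed. So I would choose a set $X'$ of left coset representatives of $C_D \cap C_{D^B}$ in $C_D$ and a set $Y'$ of right coset representatives of $C_D \cap C_{D^B}$ in $C_{D^B}$, and let $S\colon V^{D^B} \to \CC[\Sigma_n]$ be right multiplication by $\sum_{\tau \in Y'}\sgn(\tau)\,\tau$ (with signs, since we are now manipulating the column antisymmetrizers).

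The first key step is to show that if $\tau \in C_D \setminus C_{D^B}$, then $R(D)\tau C(D^B) = 0$, by finding a transposition $\rho \in R_{D^B} = R_D$ with $\tau^{-1}\rho\tau \in C_{D^B}$ and $R(D)\rho = R(D)$, so that $R(D)\tau C(D^B) = R(D)\rho\tau C(D^B) = R(D)\tau(\tau^{-1}\rho\tau)C(D^B) = -R(D)\tau C(D^B)$. Concretely, since $\tau$ preserves columns of $D$ but not of $D^B$, it must move some box $(i_1,j_1)$ to $(i_2,j_1)$ where, in $D^B$, one of these lies in the diagram and the other does not; because the only columns altered by Step B are column $l_i$ and the columns $r_{i'}+1$, and because $D$ is almost skew with weakly decreasing $r_i$ (Observation~\ref{2}), there is a box in the same row as the "extra" one available to witness the cancellation via a row transposition. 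Granting this, the computation
\[
C(D^B)R(D^B) = R(D^B)C(D^B) \text{-type manipulation}
\]
— more precisely, the analogue of the last display in Proposition~\ref{T-definition}, namely $C(D^B)R(D^B) = C(D)R(D)\bigl(\textstyle\sum_{\tau\in Y'}\sgn(\tau)\tau\bigr)\cdot(\text{reordering})$ — shows $\operatorname{im} S \subseteq V^D$, so $V^{D^B} \hookrightarrow V^D$.

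The second key step is to show $T\circ S = 0$, i.e. $C(D)R(D)\bigl(\sum_{\sigma\in Y}\sigma\bigr)\bigl(\sum_{\tau\in Y'}\sgn(\tau)\tau\bigr)$ — suitably interpreted on the $D^B$ side — is zero. Here I would use the hypothesis that \emph{both} Step A and Step B can be performed at row $i$: this forces $r_i \geq l_{i-1}-1$ and simultaneously $l_i < l_{i-1}$, $0\neq r_i\leq r_{i-1}-1$. The point is that under both conditions, row $i-1$ overlaps row $i$ in enough columns that, after the column swap of Step B, some box of $D^B$ in row $i$ lies directly below a box of $D^B$ in row $i-1$ in a column where $D^A$ would have forced them into the \emph{same} row (recall $D^A$ moves boxes from row $i$ up to row $i-1$). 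Concretely, one identifies a transposition $\tau_0$ lying in $C_{D^A} = C_D$ that becomes a row transposition for $D^B$, and plays the same sign-flipping game: $C(D)\,w\,R(D^B) = -C(D)\,w\,R(D^B)$ for the relevant $w$. I expect \textbf{this second step to be the main obstacle}: the first step is a near-verbatim reprise of James–Peel, but pinning down exactly which transposition witnesses the vanishing of $T\circ S$ requires carefully tracking the box correspondence through both Step A and Step B and using the precise numerical interplay of the two applicability conditions. Once the witnessing transposition is exhibited, the conclusion is formal: $V^{D^B}\subseteq \ker T$.
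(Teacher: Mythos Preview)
Your setup for the first part is oriented the wrong way. The statement asks for a literal containment of left ideals $V^{D^B}\subseteq V^D$, which amounts to writing $C(D^B)R(D^B)$ as a \emph{left} multiple of $C(D)R(D)$. The paper does this by taking $U$ a set of \emph{right} coset representatives of $C_D\cap C_{D^B}$ in $C_D$ and $V$ a set of \emph{left} coset representatives in $C_{D^B}$, so that $\bigl(\sum_{\sigma\in V}\sgn(\sigma)\sigma\bigr)C(D)=C(D^B)\bigl(\sum_{\sigma\in U}\sgn(\sigma)\sigma\bigr)$; proving $C(D^B)\sigma R(D)=0$ for $\sigma\in C_D\setminus C_{D^B}$ then yields $C(D^B)R(D^B)=\bigl(\sum_{\sigma\in V}\sgn(\sigma)\sigma\bigr)C(D)R(D)\in V^D$. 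Your choice of $X'$ (left cosets in $C_D$) and $Y'$ (right cosets in $C_{D^B}$) gives the transposed identity $\bigl(\sum_{X'}\bigr)C(D^B)=C(D)\bigl(\sum_{Y'}\bigr)$, and your map $S$ (right multiplication by $\sum_{Y'}$) does not land in $V^D$: the factor $R(D)$ sits between $C(D^B)$ and $\sum_{Y'}$ and does not commute past. Even if it did, you would obtain a homomorphism $V^{D^B}\to V^D$ with no control on injectivity, not the required containment. Your vanishing claim $R(D)\tau C(D^B)=0$ is, via the anti-involution $g\mapsto g^{-1}$, equivalent to the paper's $C(D^B)\tau^{-1}R(D)=0$, but you never deploy it in a way that produces the needed left-multiple expression.

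For the kernel, once $V^{D^B}\subseteq V^D$ is established as above, one must show $C(D^B)R(D^B)\cdot\sum_{\sigma\in Y}\sigma=0$ directly (not $T\circ S=0$). Rewriting this as $C(D^B)\bigl(\sum_{\sigma\in X}\sigma\bigr)R(D^A)$, the task is to prove $C(D^B)\sigma R(D^A)=0$ for \emph{every} $\sigma\in R_D$, not just those outside $R_{D^A}$. The decisive combinatorial input, which your sketch does not isolate, is a pigeonhole count: Step~B strictly increases the number of columns $j$ with both $(i,j)$ and $(i-1,j)$ in the diagram, so for any $\sigma\in R_D$ there exist $j_1,j_2$ with $(i,j_1)\in D$, $(i-1,j_1)\notin D$, $\sigma(i,j_1)=(i,j_2)$, and both $(i,j_2),(i-1,j_2)\in D^B$. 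The column transposition $\tau$ swapping $(i,j_2)$ and $(i-1,j_2)$ then lies in $C_{D^B}$, while $\sigma^{-1}\tau\sigma$ lies in $R_{D^A}$ (since $(i,j_1)$ has moved up to row $i-1$ in $D^A$), giving the cancellation. Your second step gestures at this but misidentifies the target expression and omits the counting argument that makes the transposition exist for \emph{all} $\sigma$.
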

\begin{proof}
Note $R_D=R_{D^B}$. Note that if during Step B, $(i_1, j_1) \in D$ moves to $(i_1, j_2) \in D^B$, but $(i_2, j_1) \in D \cap D^B$ does not move, then $(i_2, j_2) \in D \cap D^B$. This allows us to use a similar argument as in Proposition~\ref{T-definition}. In other words, let $U$ be a set of right coset representatives of $C_D \cap C_{D^B}$ in $C_D$ and $V$ a set of left coset representatives of $C_D \cap C_{D^B}$ in $C_{D^B}$. Then as in Proposition~\ref{T-definition}, for any $\sigma \in C_D \backslash C_{D^B}$, we have $C(D^B)\sigma R(D)=0$. Then
\[C(D^B)R(D^B) = C(D^B)\left(\sum_{\sigma \in U} \sgn(\sigma)\sigma \right) R(D) = \left(\sum_{\sigma \in V} \sgn(\sigma)\sigma \right) C(D)R(D).\]
It follows that $V^{D^B} \subset V^D$.

To see that it lies in the kernel of $T$, we need that
\[C(D^B)R(D^B)\sum_{\sigma \in Y} \sigma = C(D^B)\left(\sum_{\sigma \in X}\sigma\right) R(D^A) = 0.\] We claim that $C(D^B)\sigma R(D^A)=0$ for all $\sigma \in R_D=R_{D^B}$. As in Proposition~\ref{T-definition}, it suffices to find a transposition $\tau \in C_{D^B}$ such that $\sigma^{-1}\tau\sigma$ lies in $R_{D^A}$.

Note that \[\#\{j\mid (i,j),(i-1,j) \in D\} < \#\{j\mid (i,j),(i-1,j) \in D^B\}.\] Thus there exist $j_1$ and $j_2$ such that $(i,j_1) \in D$, $(i-1,j_1) \not \in D$, both $(i,j_2)$ and $(i-1,j_2)$ lie in $D^B$, and $\sigma$ maps $(i,j_1)$ to $(i,j_2)$. 
Then it is easy to check that letting $\tau$ be the transposition switching $(i,j_2)$ and $(i-1,j_2)$ gives the desired property, proving the result.
\end{proof}

It follows that $V^D$ contains a submodule isomorphic to $V^{D^A} \oplus V^{D^B}$. But in fact, Theorem~\ref{main} allows us to prove a stronger result.

\begin{cor}
Let $D$ be any intermediate diagram occurring in Algorithm 1. If only one of Step A or Step B can be applied to $D$, then $V^D$ does not change after the step is applied. If both Step A and Step B can be applied to D, then $V^{D^B}$ is the kernel of $T: V^D \rightarrow V^{D^A}$. Moreover, $V^D \cong \bigoplus V^{\nu}$, where the sum ranges over all applications of Algorithm 1, starting at $D$ and ending at $\nu$.
\end{cor}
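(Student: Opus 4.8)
The plan is to prove the displayed isomorphism $V^D\cong\bigoplus V^\nu$ and to read off the remaining assertions along the way. For an intermediate diagram $D$ let $d_D^\nu$ be the number of ways Algorithm 1 continues from $D$ and terminates at $\nu$, and set $W_D:=\bigoplus_\nu (V^\nu)^{\oplus d_D^\nu}$; the goal is $V^D\cong W_D$. Since the row acted on is forced (it is the maximal inversion) and the only nondeterminism is the choice between Step A and Step B when both are legal, the continuations from $D$ partition according to their first step, so $W_D\cong W_{D^A}\oplus W_{D^B}$ when both steps are available at $D$, $W_D\cong W_{D^A}$ when only Step A is, $W_D\cong W_{D^B}$ when only Step B is, and $W_D=V^\nu=V^D$ when $D$ is already a partition $\nu$. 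I will use two elementary facts: the map $T\colon V^D\to V^{D^A}$ of Proposition~\ref{T-definition} is surjective, because $\operatorname{im}T$ is a left $\CC[\Sigma_n]$-submodule containing $C(D^A)R(D^A)=T(C(D)R(D))$ and hence all of $V^{D^A}$; and the inclusion $V^{D^B}\subseteq V^D$ of the preceding proposition uses only that Step B be legal at $D$, not Step A.

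I would first prove $\dim V^D\ge\dim W_D$ for every almost skew $D$ (which, up to empty rows, is the same as an intermediate diagram) by induction on the maximal length of a continuation of Algorithm 1 from $D$, finite by termination. In the ``both steps'' case the preceding proposition embeds $V^{D^A}\oplus V^{D^B}$ in $V^D$, and in the single-step cases one uses surjectivity of $T$ or the inclusion $V^{D^B}\subseteq V^D$; each time one then applies the inductive hypothesis and the recursion for $W_D$ above. Next I would promote this to equality. For a genuine skew shape $\Lambda=\lambda/\mu$, Proposition~\ref{skew} together with Theorem~\ref{main} gives $V^\Lambda\cong\bigoplus_\nu(V^\nu)^{\oplus c^\lambda_{\mu\nu}}=W_\Lambda$, so $\dim V^\Lambda=\dim W_\Lambda$, and this equality propagates downward along the algorithm: if $\dim V^D=\dim W_D$ and both steps apply at $D$, then $\dim W_{D^A}+\dim W_{D^B}=\dim W_D=\dim V^D\ge\dim V^{D^A}+\dim V^{D^B}\ge\dim W_{D^A}+\dim W_{D^B}$ by the previous step, so every inequality is an equality and $\dim V^{D^A}=\dim W_{D^A}$, $\dim V^{D^B}=\dim W_{D^B}$; the single-step cases are the same with one summand. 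By Observation~\ref{almostskew} every almost skew diagram is an intermediate diagram for Algorithm 1 applied to some skew shape, so $\dim V^D=\dim W_D$ for every $D$ in question.

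A second induction on the maximal continuation length, now using semisimplicity of $\CC[\Sigma_n]$ and the dimension equality just proved, finishes everything. When both steps apply, the inductive hypothesis gives $V^{D^A}\cong W_{D^A}$ and $V^{D^B}\cong W_{D^B}$; since $T$ is surjective, $V^{D^B}\subseteq\ker T$, and $\dim V^D=\dim W_D=\dim V^{D^A}+\dim V^{D^B}$, comparing dimensions forces $\ker T=V^{D^B}$, and then $V^D\cong\operatorname{im}T\oplus\ker T=V^{D^A}\oplus V^{D^B}\cong W_D$ — which is also the claim that $V^{D^B}$ is the kernel of $T$. When only Step A applies, $T$ is a surjection of modules of equal dimension and hence an isomorphism, so $V^D\cong V^{D^A}\cong W_D$; when only Step B applies, $V^{D^B}\subseteq V^D$ is an inclusion of modules of equal dimension and hence an equality, so $V^D\cong V^{D^B}\cong W_D$. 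In both single-step cases $V^D$ is unchanged, and in every case $V^D\cong W_D=\bigoplus_\nu(V^\nu)^{\oplus d_D^\nu}$ is the asserted decomposition.

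The crux is the upper bound $\dim V^D\le\dim W_D$: the submodule structure of the previous propositions only gives the reverse inequality, and equality seems to require feeding in the classical Littlewood--Richardson input via Proposition~\ref{skew} and Theorem~\ref{main} at genuine skew shapes and then transporting that equality through the algorithm, so Observation~\ref{almostskew} (every almost skew diagram is reached from a skew one) is essential. A smaller point to verify is that the containment $V^{D^B}\subseteq V^D$ really does hold whenever Step B is legal even if Step A is not — so that the ``only Step B'' case is covered — which is exactly what the first half of the proof of the preceding proposition establishes.
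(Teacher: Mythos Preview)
Your argument is correct and its core is the same as the paper's: one gets the containment $V^{D^A}\oplus V^{D^B}\hookrightarrow V^D$ from the preceding propositions, hence $\dim V^D\ge\dim W_D$ by induction along the algorithm; equality at genuine skew shapes comes from Proposition~\ref{skew} and Theorem~\ref{main}, and then propagates to every intermediate diagram via Observation~\ref{almostskew}. The kernel statement and the decomposition then follow by dimension counting and semisimplicity, exactly as you say.

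The one place you and the paper diverge is the single-step cases. You handle ``only Step A'' by surjectivity of $T$ plus the dimension equality, and ``only Step B'' by the inclusion $V^{D^B}\subseteq V^D$ (correctly noting that the first half of the preceding proposition's proof uses only the legality of Step B) plus the dimension equality. The paper instead observes directly from Observation~\ref{A-same} that when only Step A is legal the step merely permutes two rows, and when only Step B is legal it merely permutes columns; since row and column permutations leave $R_D$ and $C_D$ unchanged, $V^D$ is literally equal to $V^{D^A}$ (respectively $V^{D^B}$) in these cases. This is shorter and avoids having to feed the dimension equality back through the single-step branches, but your uniform treatment works just as well.
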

\begin{proof}
Note that by Observation~\ref{A-same}, if only Step A can be applied to $D$, then its only effect is to switch two rows. Likewise, if only Step B can be applied, its only effect is to switch two columns. Since these operations yield isomorphic Specht modules, this proves the first claim.

If both Step A and Step B can be applied, then $V^D$ contains a submodule isomorphic to $V^{D^A} \oplus V^{D^B}$. It follows that $V^D$ contains a submodule isomorphic to $\bigoplus V^{\nu}$ as defined above. But from Proposition~\ref{skew}, we know that equality holds if $D$ is the original skew diagram $\lambda / \mu$. It follows that equality must hold at each intermediate step as well.
\end{proof}

Ignoring empty rows, Observation~\ref{almostskew} characterizes all diagrams that can occur as intermediate diagrams of Algorithm 1. The corollary above thereby allows us to determine the Specht module decomposition of any almost skew diagram.

\section{Conclusion}

In this paper, we have translated Coskun's geometric Littlewood-Richardson rule into a simple combinatorial algorithm that is equivalent to the classical Littlewood-Richardson rule. This suggests that there may be a more direct connection between the geometry of the Grassmannian and the combinatorics of tableaux and representations of $\Sigma_n$.

The algorithm that we have translated here is only a special case of another algorithm that Coskun introduces to compute the Schubert class of more general subvarieties of the Grassmannian, and it would be interesting to see if a similar translation as presented here could yield nontrivial combinatorial results. One could also try to determine whether or not Coskun's algorithm for computing intersections in more general flag varieties is amenable to a similar approach.

Relating to Specht modules, we have shown some special cases in which the map $T$ between Specht modules has a kernel that is also a Specht module. It would be interesting to see if one could show this via a more direct approach or if one could precisely compute the kernel in a more general setting.

\bibliography{lrrule2}
\bibliographystyle{plain}

\end{document}